\newtheorem{thm}{Theorem}
\newtheorem{lem}{Lemma}
\newtheorem{prop}{Proposition}
\newtheorem{cor}{Corollary}
\newtheorem{exm}{Example}
\begin{document}

\begin{center}
 {\Large \bf Domino tilings with diagonal impurities}

 \bigskip
 Fumihiko Nakano
\footnote{Faculty of Science, 
Department of Mathematics and Information Science,
Kochi University,
2-5-1, Akebonomachi, Kochi, 780-8520, Japan.
e-mail : 
nakano@math.kochi-u.ac.jp}
 and Taizo Sadahiro\footnote{
Faculty of Administration, 
Prefectural University of Kumamoto, 
Tsukide 3-1-100, Kumamoto, 862-8502, Japan.
e-mail : sadahiro@pu-kumamoto.ac.jp}

\end{center}

\begin{abstract}
This paper studies the dimer model 
on the dual graph of the square-octagon lattice, 
which can be viewed as the domino tilings with
impurities in some sense.
In particular, under a certain boundary condition,
we give an exact formula representing the probability of
finding an impurity at a given site in a uniformly random dimer
 configuration in terms of simple random walks on the square lattice.
\end{abstract}

\section{Introduction}
Although the dimer models on planar bipartite lattice graphs
have been greatly advanced 
over the last decade (see e.g., \cite{KOS},\cite{CKP}),
much less is known about non-bipartite cases.
This paper  deals with a non-bipartite lattice $\Gamma$,
the dual of the square-octagon lattice.
As will be clear later, the dimer model on $\Gamma$ can be
viewed as the domino tiling model containing certain impurities.
Our main aim in this paper is to study the behavior of these
impurities. 
In particular, under a certain boundary condition,
we give an exact formula representing the probability of
finding an impurity at a given site in a uniformly random dimer
 configuration in terms of the simple random walks on the square lattice.


\newcommand{\unitocatagon}{
   \psline(0.7071,-0.2929)(0.7071,0.2929)
   (0.2929,0.7071)(0.2929,0.7071)(-0.2929,0.7071)
   (-0.7071,0.2929)(-0.7071,-0.2929)
   \psline(-0.2929,-0.7071)(0.2929,-0.7071)
}
\begin{figure}[H]
 \begin{center}
 \begin{pspicture}(5,3)(-4,-2.2)
   \psset{unit=1.5}
  \begin{psclip}{
   \psframe[linecolor=white](-2.8,-1.8)(2.8,2.2)
   }
  \multiput(0,-4)(0,1){10}{\psline[linewidth=0.01](-10,0)(10,0)}
  \multiput(-4,0)(1,0){10}{\psline[linewidth=0.01](0,-10)(0,10)}
  \multiput(-8,0)(2,0){7}{\psline[linewidth=0.01](-10,10)(10,-10)}
  \multiput(-8,0)(2,0){7}{\psline[linewidth=0.01](10,10)(-10,-10)}
   \psset{linestyle=dashed, linecolor=gray,linewidth=0.01}
   \multiput(0,-6)(0,2){6}{\multiput(-6,0)(2,0){12}{\unitocatagon}}
   \psset{linestyle=dashed, linecolor=gray,linewidth=0.01}
   \multiput(1,-5)(0,2){5}{\multiput(-4,0)(2,0){11}{\unitocatagon}}

   \psset{linecolor=black,linestyle=solid,linewidth=0.02}
   \multiput(0,-4)(2,2){4}{
   \multiput(-9,0)(2,0){6}{\pscircle[fillstyle=solid,fillcolor=black](0,0){.07}}
   \multiput(-8,-1)(2,0){6}{\pscircle[fillstyle=solid,fillcolor=black](0,0){.07}}
   }
   \multiput(-1,-4)(2,2){4}{
   \multiput(-9,0)(2,0){6}{\pscircle[fillstyle=solid,fillcolor=white](0,0){.07}}
   \multiput(-8,-1)(2,0){6}{\pscircle[fillstyle=solid,fillcolor=white](0,0){.07}}
   }
  \end{psclip}
 \end{pspicture}
  \caption{Square-Octagon graph (dashed) and its dual }
  \label{fig:dualsqoct}
 \end{center}
\end{figure}
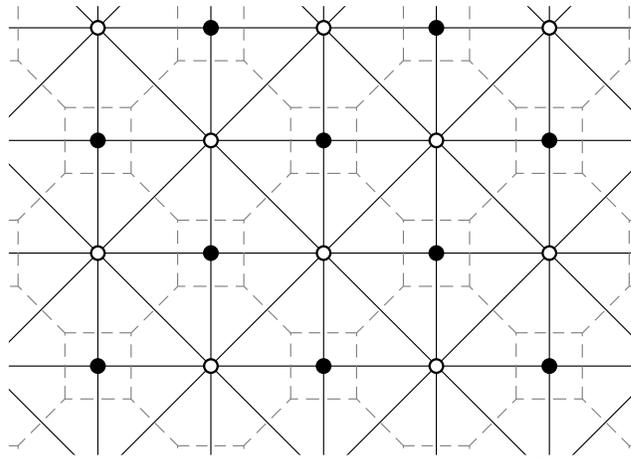

We define the {\em dual-square-octagon} graph $\Gamma$ as follows:
the vertices $V(\Gamma)$ is ${\mathbb Z}^2$ which is divided into  two subsets
$W = \{(x,y)\in V(\Gamma)~|~ x+y \mbox{ is even}\}$
and 
$B =  V(\Gamma)\backslash W$,
and there is an edge between  $v$ and $v'\in V(\Gamma)$ if and only if 
\[
 v-v' \in \left\{ \pm(1,0), \pm(0,1)\right\}
\]
or
\[
 v,v'\in W \mbox{ and } v-v'\in \{\pm(1,1), \pm(1,-1) \}.
\]
Thus, $\Gamma$ is the dual graph of the {\em square-octagon} lattice
graph (see Figure $\ref{fig:dualsqoct}$).
We say a vertex is white (resp. black) if it is in  $W$ (resp. $B$).
We call an edge connecting two white vertices a {\em diagonal} edge.
The edge set $E(\Gamma)$ of $\Gamma$ is divided into two disjoint subsets $E_1$ and $E_2$,
where $E_2$ is the set of diagonal edges and $E_1=E(\Gamma)\backslash E_1$.
Therefore, the graph $\Gamma$ is obtained from
the ordinary square lattice graph
by adding the edges $E_2$.
We denote by $\{v,v'\}$ the unoriented edge between two vertices $v$ and
$v'$.
In the following we sometimes need to orient the edges, and
we denote by $(v,v')$ the oriented edge {\em from $v$ to $v'$}.

A dimer covering (or perfect matching) $M$ of a graph $G=(V(G), E(G))$ is a
subset of the edge set $E(G)$ such that each element of the vertex set $V(G)$
is incident to exactly one element of $M$.
We call an edge $e$ in a dimer covering a {\em dimer}.
We say a subgraph $G$ of $\Gamma$ is {\em simply connected},
if $G$ and $\Gamma\backslash G$ are both connected.
We say a subgraph of $\Gamma$ is {\em normal}, if
it is simply connected and induced by a finite subset of $V(\Gamma)$.
This paper deals with  the dimer coverings of normal subgraphs of $\Gamma$.
A dimer covering  of a normal graph is equivalent to a tilings of the corresponding region
by square-octagon and octagon-octagon tiles (see Figure
$\ref{fig:matchingtiling}$).

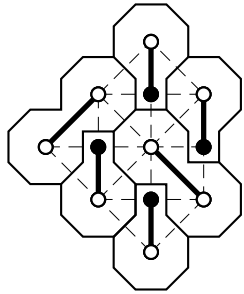
\begin{figure}[H]

 \begin{center}
  \psset{unit=0.7}
  \begin{pspicture}(4,4)(0,-1)

   \psline[linestyle=dashed,linewidth=0.01](0,2)(2,0)
   \psline[linestyle=dashed,linewidth=0.01](1,3)(3,1)
   \psline[linestyle=dashed,linewidth=0.01](0,2)(1,3)
   \psline[linestyle=dashed,linewidth=0.01](1,1)(2,2)
   \psline[linestyle=dashed,linewidth=0.01](2,0)(3,1)
   \psline[linestyle=dashed,linewidth=0.01](0,2)(2,2)
   \psline[linestyle=dashed,linewidth=0.01](1,1)(3,1)
   \psline[linestyle=dashed,linewidth=0.01](1,1)(1,3)
   \psline[linestyle=dashed,linewidth=0.01](2,0)(2,2)
   \psline[linestyle=dashed,linewidth=0.01](1,3)(2,4)(3,3)(3,1)
   \psline[linestyle=dashed,linewidth=0.01](3,3)(2,2)(2,4)
   \psline[linestyle=dashed,linewidth=0.01](3,3)(1,3)
   \psline[linestyle=dashed,linewidth=0.01](2,2)(3,2)

   \put(2,0){\pscircle[fillstyle=solid,fillcolor=white](0,0){0.15}}
   \put(1,1){\pscircle[fillstyle=solid,fillcolor=white](0,0){0.15}}
   \put(3,1){\pscircle[fillstyle=solid,fillcolor=white](0,0){0.15}}
   \put(0,2){\pscircle[fillstyle=solid,fillcolor=white](0,0){0.15}}
   \put(2,2){\pscircle[fillstyle=solid,fillcolor=white](0,0){0.15}}
   \put(1,3){\pscircle[fillstyle=solid,fillcolor=white](0,0){0.15}}
   \put(3,3){\pscircle[fillstyle=solid,fillcolor=white](0,0){0.15}}
   \put(2,4){\pscircle[fillstyle=solid,fillcolor=white](0,0){0.15}}

   \put(2,1){\pscircle[fillstyle=solid,fillcolor=black](0,0){0.15}}
   \put(1,2){\pscircle[fillstyle=solid,fillcolor=black](0,0){0.15}}
   \put(3,2){\pscircle[fillstyle=solid,fillcolor=black](0,0){0.15}}
   \put(2,3){\pscircle[fillstyle=solid,fillcolor=black](0,0){0.15}}

   \psline[linewidth=0.1]{*-*}(0,2)(1,3)
   \psline[linewidth=0.1]{*-*}(2,2)(3,1)
   \psline[linewidth=0.1]{*-*}(1,1)(1,2)
   \psline[linewidth=0.1]{*-*}(2,0)(2,1)
   \psline[linewidth=0.1]{*-*}(2,3)(2,4)
   \psline[linewidth=0.1]{*-*}(3,2)(3,3)

   \put(2,0){\pscircle[fillstyle=solid,fillcolor=white](0,0){0.15}}
   \put(1,1){\pscircle[fillstyle=solid,fillcolor=white](0,0){0.15}}
   \put(3,1){\pscircle[fillstyle=solid,fillcolor=white](0,0){0.15}}
   \put(0,2){\pscircle[fillstyle=solid,fillcolor=white](0,0){0.15}}
   \put(2,2){\pscircle[fillstyle=solid,fillcolor=white](0,0){0.15}}
   \put(1,3){\pscircle[fillstyle=solid,fillcolor=white](0,0){0.15}}
   \put(3,3){\pscircle[fillstyle=solid,fillcolor=white](0,0){0.15}}
   \put(2,4){\pscircle[fillstyle=solid,fillcolor=white](0,0){0.15}}

   \put(2,1){\pscircle[fillstyle=solid,fillcolor=black](0,0){0.15}}
   \put(1,2){\pscircle[fillstyle=solid,fillcolor=black](0,0){0.15}}
   \put(3,2){\pscircle[fillstyle=solid,fillcolor=black](0,0){0.15}}
   \put(2,3){\pscircle[fillstyle=solid,fillcolor=black](0,0){0.15}}

   \pspolygon(0.7071,1.7071)(0.7071,2.2929)(1.2929,2.2929)(1.2929,1.7071)
   (1.7071,1.2929)(1.7071,0.7071)(1.2929,0.2929)(0.7071,0.2929)(0.2929,0.7071)
   (0.2929,1.2929)
   \put(1,-1){\pspolygon(0.7071,1.7071)(0.7071,2.2929)(1.2929,2.2929)(1.2929,1.7071)
   (1.7071,1.2929)(1.7071,0.7071)(1.2929,0.2929)(0.7071,0.2929)(0.2929,0.7071)
   (0.2929,1.2929)}
   \psline(1.2929,2.2929)(1.7071,2.7071)(2.2929,2.7071)(2.7071,2.2929)(2.7071,1.7071)
   (3.2929,1.7071)(3.7071,1.2929)(3.7071,0.7071)(3.2929,0.2929)(2.7071,0.2929)
   \psline(0.2929,1.2929)(-0.2929,1.2929)(-0.7071,1.7071)(-0.7071,2.2929)(-0.2929,2.7071)
   (0.2929,2.7071)(0.2929,3.2929)(0.7071,3.7071)(1.2929,3.7071)(1.7071,3.2929)(1.7071,2.7071)
   \psline(1.2929,3.7071)(1.2929,4.2929)(1.7071,4.7071)(2.2929,4.7071)(2.7071,4.2929)(2.7071,3.7071)
   (2.2929,3.2929)(2.2929,2.7071)
   \psline(2.7071,3.7071)(3.2929,3.7071)(3.7071,3.2929)(3.7071,2.7071)(3.2929,2.2929)(3.2929,1.7071)

  \end{pspicture}
  \caption{Tiling and dimer covering}
  \label{fig:matchingtiling}
  \end{center}
 
\end{figure}

For a normal subgraph $G$ of $\Gamma$, we denote
\[
 W_G=V(G)\cap W, ~~ B_G=V(G)\cap B.
\]
Let $M$ be a dimer covering of a normal graph $G$ and
let $k$ be the number of diagonal edges in $M$. 
Then 
\begin{equation}\label{eq:numhyp}
 k =
  \frac{
 |W_G| - |B_G|
 }{2}.
\end{equation}
Hence, the number of diagonal edges in a dimer covering $M$ of $G$
is an invariant of $G$, not depending on the choice of $M$.
If a dimer covering $M$ of $G$ does not contain diagonal edge,
then it is a dimer covering of the ordinary square lattice graph,
also known as the {\em domino tiling}, which has been extensively
studied.
In this respect, it may be natural to call a dimer $e\in E_2$ of $G$
{\em impurity}.
Our main aim in this paper is to study the behavior of these impurities.
In our forthcoming paper it will be  shown  that the local transformations which
will be introduced as the $t$-moves and the $s$-moves in the next
section connects all dimer coverings, that is,
any dimer covering of a normal graph $G$ can be transformed into
any other dimer covering of $G$ by applying some sequence of the local transformations.
This property enables one to construct
an ergodic  Markov chain whose state space is the dimer coverings.
Figure \ref{fig:mcmc} shows the result of a simulation of the Markov chain
whose stationary distribution is uniform, where we can see that
the impurities tend to be located near
the diagonal edges on the boundary of the graph.
\begin{figure}[H]
 \begin{center}
   {\includegraphics[width=8cm]{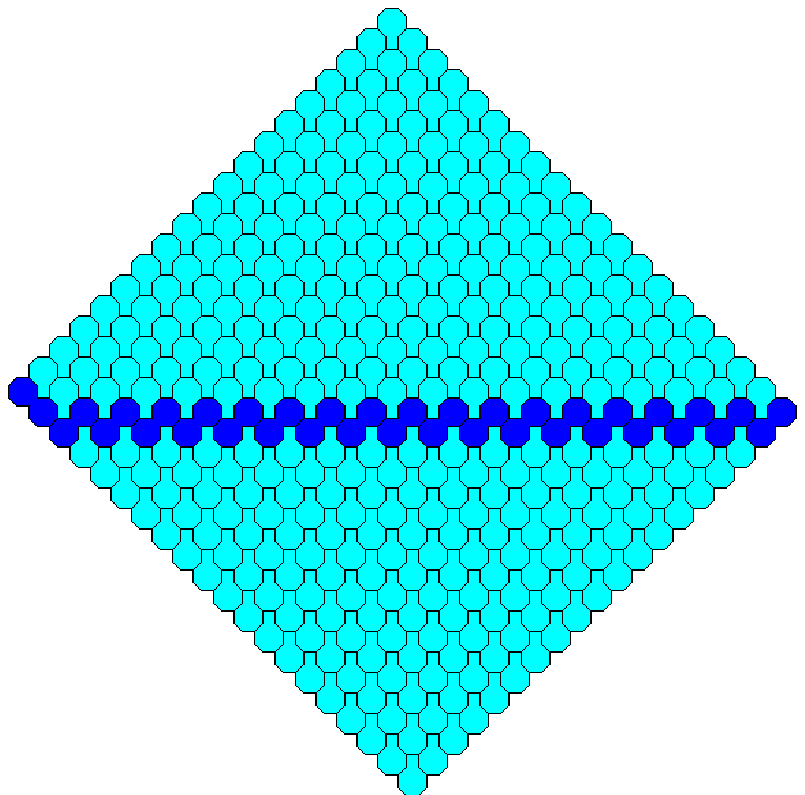}}
   {\includegraphics[width=8cm]{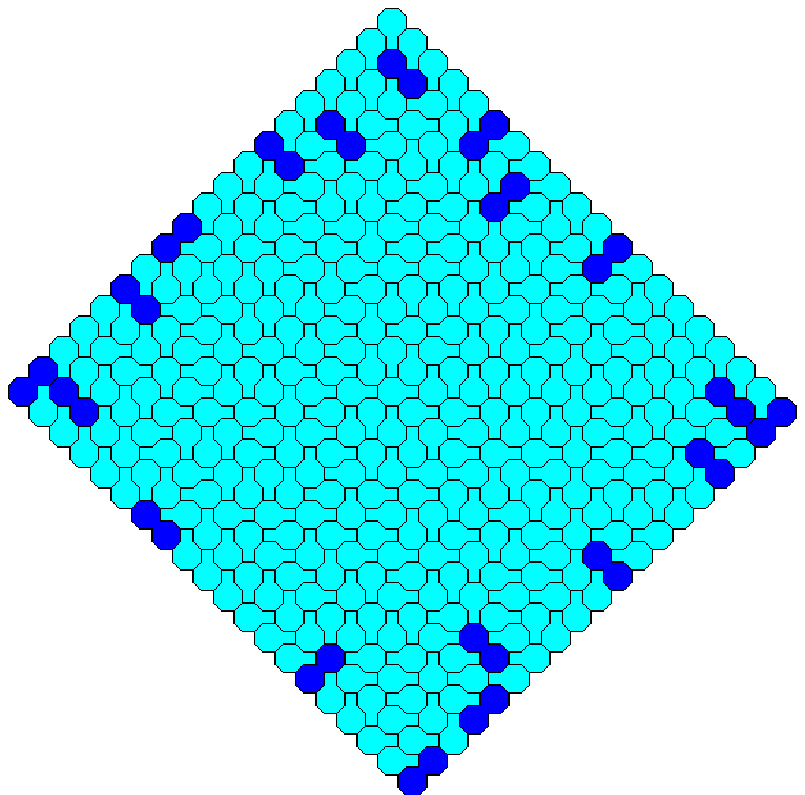}}
  \caption{Markov chain simulation:
  the initial configuration (left) and the configuration
  after $10^6$ steps (right)}
  \label{fig:mcmc}
 \end{center}
\end{figure}

The rest of this paper is organized as follows.
Section $\ref{sec:localmove}$ provides
some basic properties of the dimer model on $\Gamma$.
In Section $\ref{sec:oneimpurity}$,
we show an exact result on the easiest case where dimer coverings contains
exactly one impurity.

\section{Local moves and impurities' orbits}
\label{sec:localmove}
Let $\{a,b\}$ and $\{c,d\}$ be dimers contained in a dimer covering
$M$ of a normal graph $G$, which satisfy one of the followings:
\begin{description}
 \item[S:] $a,b,c,d$ are the four vertices of a unit square.
 \item[T:] $\{a,b\},\{b,c\}\in E_2$ and $\{c,d\}, \{d,a\}\in E_1$.
\end{description}
In case of $\bf S$ (resp. $\bf T$), 
we call the transformation which transforms $M$ into another 
by replacing $\{ \{a,b\}, \{c,d\} \}$ with
$\{\{b,c\},\{d,a\}\}$  an {\em $s$-move}
(resp. {\em $t$-move}),
which is shown in Figure $\ref{fig:localmoves}$.

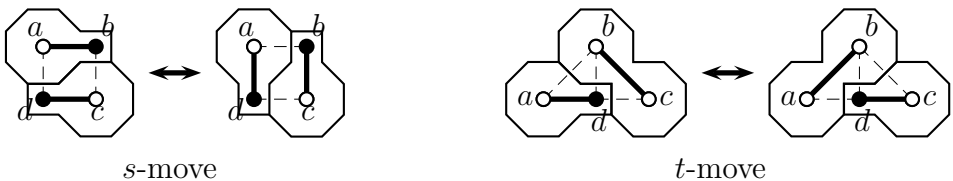
\begin{figure}[H]
 \begin{center}
     \begin{pspicture}(12,5)(-2,1.5)
   \psset{unit=0.7}
   \put(-1,4){
   \psline[linecolor=black,linewidth=0.01,linestyle=dashed](0,0)(0,1)
   \psline[linecolor=black,linewidth=0.01,linestyle=dashed](1,0)(1,1)
   \psline[linewidth=0.1]{*-*}(0,0)(1,0)
   \psline[linewidth=0.1]{*-*}(0,1)(1,1)
   \put(1,0){\pscircle[fillstyle=solid,fillcolor=white](0,0){0.15}}
   \put(0,1){\pscircle[fillstyle=solid,fillcolor=white](0,0){0.15}}
    \pspolygon(-0.2929,-0.2929)(-0.2929,0.2929)(0.2929,0.2929)(0.7071,0.7071)
    (1.2929,0.7071)(1.7071,0.2929)(1.7071,-0.2929)(1.2929,-0.7071)(0.7071,-0.7071)
    (0.2929,-0.2929)
    \put(0,1){
    \pspolygon(-0.2929,-0.7071)(-0.7071,-0.2929)(-0.7071,0.2929)
    (-0.2929,0.7071)(0.2929,0.7071)(0.7071,0.2929)(1.2929,0.2929)(1.2929,-0.2929)
    (0.7071,-0.2929)(0.2929,-0.7071)
    }
      \put(-.3,1.2){$a$}
      \put(1.1,1.2){$b$}
      \put(.9,-.4){$c$}
      \put(-.5,-.4){$d$}

   \put(4,0){
   \psline[linecolor=black,linewidth=0.01,linestyle=dashed](0,0)(1,0)
   \psline[linecolor=black,linewidth=0.01,linestyle=dashed](0,1)(1,1)
   \psline[linewidth=0.1]{*-*}(0,0)(0,1)
   \psline[linewidth=0.1]{*-*}(1,0)(1,1)
   \put(1,0){\pscircle[fillstyle=solid,fillcolor=white](0,0){0.15}}
   \put(0,1){\pscircle[fillstyle=solid,fillcolor=white](0,0){0.15}}
    \put(0,-1){
    \pspolygon(0.7071,1.7071)(0.7071,2.2929)(1.2929,2.2929)(1.2929,1.7071)
   (1.7071,1.2929)(1.7071,0.7071)(1.2929,0.2929)(0.7071,0.2929)(0.2929,0.7071)
   (0.2929,1.2929)
    }
    \put(0,0){
    \psline(0.2929,-0.2929)(-0.2929,-0.2929)(-0.2929,0.2929)(-0.7071,0.7071)(-0.7071,1.2929)
    (-0.2929,1.7071)(0.2929,1.7071)(0.7071,1.2929)
    }
      \put(-.3,1.2){$a$}
      \put(1.1,1.2){$b$}
      \put(.9,-.4){$c$}
      \put(-.5,-.4){$d$}
    }
   \psline[linewidth=0.1]{<->}(2,0.5)(3,0.5)
   \put(1.5,-1.5){$s$-move}
    }

   \put(9,4){
    \put(-0.5,0){
   \psline[linecolor=black,linewidth=0.01,linestyle=dashed](1,0)(2,0)
   \psline[linecolor=black,linewidth=0.01,linestyle=dashed](0,0)(1,1)
   \psline[linecolor=black,linewidth=0.01,linestyle=dashed](1,0)(1,1)
   \psline[linewidth=0.1]{*-*}(0,0)(1,0)
   \psline[linewidth=0.1]{*-*}(2,0)(1,1)
   \put(0,0){\pscircle[fillstyle=solid,fillcolor=white](0,0){0.15}}
   \put(1,1){\pscircle[fillstyle=solid,fillcolor=white](0,0){0.15}}
   \put(2,0){\pscircle[fillstyle=solid,fillcolor=white](0,0){0.15}}
    \put(0,0){
    \pspolygon(-0.2929,-0.7071)(-0.7071,-0.2929)(-0.7071,0.2929)
    (-0.2929,0.7071)(0.2929,0.7071)(0.7071,0.2929)(1.2929,0.2929)(1.2929,-0.2929)
    (0.7071,-0.2929)(0.2929,-0.7071)
    }
    \put(1,1){
    \pspolygon(-0.7071,-0.2929)(-0.7071,0.2929)(-0.2929,0.7071)(0.2929,0.7071)
    (0.7071,0.2929)(0.7071,-0.2929)(1.2929,-0.2929)(1.7071,-0.7071)(1.7071,-1.2929)
    (1.2929,-1.7071)(0.7071,-1.7071)(0.2929,-1.2929)(0.2929,-0.7071)(-0.2929,-0.7071)
    }
      \put(-.5,-.1){$a$}
      \put(1.1,1.2){$b$}
      \put(2.2,-.1){$c$}
      \put(.9,-.6){$d$}
    }

   \put(4.5,0){
   \psline[linecolor=black,linewidth=0.01,linestyle=dashed](0,0)(1,0)
   \psline[linecolor=black,linewidth=0.01,linestyle=dashed](2,0)(1,1)
   \psline[linecolor=black,linewidth=0.01,linestyle=dashed](1,0)(1,1)
   \psline[linewidth=0.1]{*-*}(1,0)(2,0)
   \psline[linewidth=0.1]{*-*}(0,0)(1,1)
   \put(0,0){\pscircle[fillstyle=solid,fillcolor=white](0,0){0.15}}
   \put(1,1){\pscircle[fillstyle=solid,fillcolor=white](0,0){0.15}}
   \put(2,0){\pscircle[fillstyle=solid,fillcolor=white](0,0){0.15}}
    \put(1,0){
        \pspolygon(-0.2929,-0.2929)(-0.2929,0.2929)(0.2929,0.2929)(0.7071,0.7071)
    (1.2929,0.7071)(1.7071,0.2929)(1.7071,-0.2929)(1.2929,-0.7071)(0.7071,-0.7071)
    (0.2929,-0.2929)
    }
    \put(0,0){
    \pspolygon(-0.2929,-0.7071)(-0.7071,-0.2929)(-0.7071,0.2929)
    (-0.2929,0.7071)(0.2929,0.7071)(0.2929,1.2929)(0.7071,1.7071)
    (1.2929,1.7071)(1.7071,1.2929)(1.7071,0.7071)(1.2929,0.2929)
    (0.7071,0.2929)(0.7071,-0.2929)(0.2929,-0.7071)
    }
      \put(-.5,-.1){$a$}
      \put(1.1,1.2){$b$}
      \put(2.2,-.1){$c$}
      \put(.9,-.6){$d$}

   }

   \psline[linewidth=0.1]{<->}(2.5,0.5)(3.5,0.5)
   \put(2,-1.5){$t$-move}
   }

  \end{pspicture}
  \caption{local moves}
  \label{fig:localmoves}
 \end{center}
\end{figure}

We divide the white vertices $W$ into two parts
$W_0=2{\mathbb Z}\times 2{\mathbb Z}$ and
$W_1=W_0+(1,1)$ and define
two graphs $\Lambda$ and $\Lambda^\perp$ as follows:
$\Lambda$ has vertices $W_0$ and it has an edge
between $v$ and $v'\in W_0$ if and only if $v-v'\in\{\pm(2,0), \pm(0,2)\}$,
$\Lambda^\perp$ is the dual graph of $\Lambda$ having
vertices $W_1$.
Let $w_1,w_2, w_3, w_4$ be four white vertices
which are adjacent to a black vertex $b$ 
listed in counter-clockwise order
as shown in Figure $\ref{fig:arc-dimer}$.
Then one of the two sets $\{w_1,w_3\}$
and $\{w_2, w_4\}$ is contained in $W_0$ and
the other is in $W_1$.
Let us assume that
$w_1, w_2$ and $b$ are contained in a normal graph $G$
and $w_3$ and $w_4$ are not necessarily contained in $G$.
For a dimer covering $M$ of $G$ we draw
an arc centered at $w_1$ (resp. $w_2$) 
which starts at the middle point of the edge $\{w_1, w_2\}$
and ends at a point on the edge $\{w_1,b\}$ (resp. $\{w_2,b\}$)
if $\{w_2,b\}$ or $\{w_4,b\}$ (resp. $\{w_1,b\}$ or $\{w_3,b\}$)
is contained in $M$.
Then a dimer covering $M$ of $G$ defines
curves on the plane composed of these arcs, which we call the {\em
slit-curves}.
Figure $\ref{fig:slit-curves}$ shows an example of slit-curves.

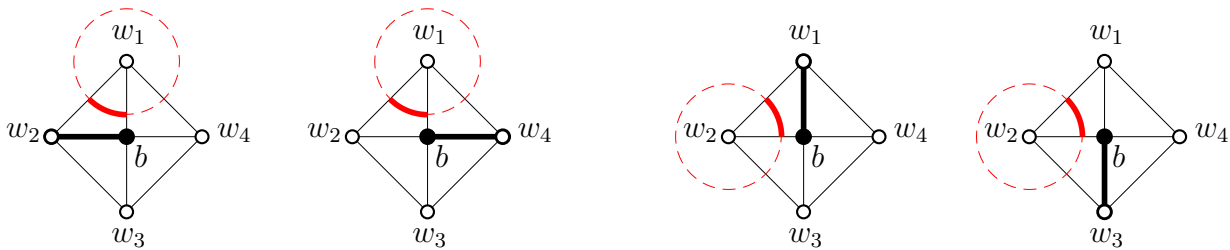
\begin{figure}[H]
 \begin{center}
  \begin{pspicture}(14,3)
   \put(0,0){
   \psline[linewidth=0.01](1,0)(2,1)(1,2)(0,1)(1,0)(1,2)
   \psline[linewidth=0.01](0,1)(2,1)
   \psline[linewidth=0.075,linecolor=black]{o-o}(1,1)(0,1)
   \put(1,2){\put(-0.2,0.3){$w_1$}}
   \put(0,1){\put(-0.6,0){$w_2$}}
   \put(1,0){\put(-0.2,-0.4){$w_3$}}
   \put(2,1){\put(0.2,0){$w_4$}}
   \put(1,1){\put(0.1,-0.4){$b$}}
   \put(1,0){\pscircle[fillstyle=solid,fillcolor=white](0,0){0.1}}
   \put(2,1){\pscircle[fillstyle=solid,fillcolor=white](0,0){0.1}}
   \put(1,2){\pscircle[fillstyle=solid,fillcolor=white](0,0){0.1}}
   \put(0,1){\pscircle[fillstyle=solid,fillcolor=white](0,0){0.1}}
   \put(1,1){\pscircle[fillstyle=solid,fillcolor=black](0,0){0.1}}
   \psarc[linecolor=red,linewidth=0.075](1,2){0.707}{225}{270}
   \pscircle[linecolor=red,linewidth=0.01,linestyle=dashed](1,2){0.707}
   }
   \put(4,0){
   \psline[linewidth=0.01](1,0)(2,1)(1,2)(0,1)(1,0)(1,2)
   \psline[linewidth=0.01](0,1)(2,1)
   \psline[linewidth=0.075,linecolor=black]{o-o}(1,1)(2,1)
   \put(1,2){\put(-0.2,0.3){$w_1$}}
   \put(0,1){\put(-0.6,0){$w_2$}}
   \put(1,0){\put(-0.2,-0.4){$w_3$}}
   \put(2,1){\put(0.2,0){$w_4$}}
   \put(1,1){\put(0.1,-0.4){$b$}}
   \put(1,0){\pscircle[fillstyle=solid,fillcolor=white](0,0){0.1}}
   \put(2,1){\pscircle[fillstyle=solid,fillcolor=white](0,0){0.1}}
   \put(1,2){\pscircle[fillstyle=solid,fillcolor=white](0,0){0.1}}
   \put(0,1){\pscircle[fillstyle=solid,fillcolor=white](0,0){0.1}}
   \put(1,1){\pscircle[fillstyle=solid,fillcolor=black](0,0){0.1}}
   \psarc[linecolor=red,linewidth=0.075](1,2){0.707}{225}{270}
   \pscircle[linecolor=red,linewidth=0.01,linestyle=dashed](1,2){0.707}
   }

   \put(9,0){
   \psline[linewidth=0.01](1,0)(2,1)(1,2)(0,1)(1,0)(1,2)
   \psline[linewidth=0.01](0,1)(2,1)
   \psline[linewidth=0.075,linecolor=black]{o-o}(1,1)(1,2)
   \put(1,2){\put(-0.2,0.3){$w_1$}}
   \put(0,1){\put(-0.6,0){$w_2$}}
   \put(1,0){\put(-0.2,-0.4){$w_3$}}
   \put(2,1){\put(0.2,0){$w_4$}}
   \put(1,1){\put(0.1,-0.4){$b$}}
   \put(1,0){\pscircle[fillstyle=solid,fillcolor=white](0,0){0.1}}
   \put(2,1){\pscircle[fillstyle=solid,fillcolor=white](0,0){0.1}}
   \put(1,2){\pscircle[fillstyle=solid,fillcolor=white](0,0){0.1}}
   \put(0,1){\pscircle[fillstyle=solid,fillcolor=white](0,0){0.1}}
   \put(1,1){\pscircle[fillstyle=solid,fillcolor=black](0,0){0.1}}
   \psarc[linecolor=red,linewidth=0.075](0,1){0.707}{0}{45}
   \pscircle[linecolor=red,linewidth=0.01,linestyle=dashed](0,1){0.707}
   }

   \put(13,0){
   \psline[linewidth=0.01](1,0)(2,1)(1,2)(0,1)(1,0)(1,2)
   \psline[linewidth=0.01](0,1)(2,1)
   \psline[linewidth=0.075,linecolor=black]{o-o}(1,1)(1,0)
   \put(1,2){\put(-0.2,0.3){$w_1$}}
   \put(0,1){\put(-0.6,0){$w_2$}}
   \put(1,0){\put(-0.2,-0.4){$w_3$}}
   \put(2,1){\put(0.2,0){$w_4$}}
   \put(1,1){\put(0.1,-0.4){$b$}}
   \put(1,0){\pscircle[fillstyle=solid,fillcolor=white](0,0){0.1}}
   \put(2,1){\pscircle[fillstyle=solid,fillcolor=white](0,0){0.1}}
   \put(1,2){\pscircle[fillstyle=solid,fillcolor=white](0,0){0.1}}
   \put(0,1){\pscircle[fillstyle=solid,fillcolor=white](0,0){0.1}}
   \put(1,1){\pscircle[fillstyle=solid,fillcolor=black](0,0){0.1}}
   \psarc[linecolor=red,linewidth=0.075](0,1){0.707}{0}{45}
   \pscircle[linecolor=red,linewidth=0.01,linestyle=dashed](0,1){0.707}
   }

  \end{pspicture}
 \end{center}
 \caption{Arcs and dimers}
 \label{fig:arc-dimer}
\end{figure}

\begin{figure}[H]
\newcommand{\vblock}{
   \put(-1,0){\psarc[linecolor=red](0,0){0.707}{-45}{45}}
   \put(1,0){\psarc[linecolor=red](0,0){0.707}{135}{225}}
}
\newcommand{\hblock}{
   \put(0,-1){\psarc[linecolor=red](0,0){0.707}{45}{135}}
   \put(0,1){\psarc[linecolor=red](0,0){0.707}{225}{315}}
}
 \begin{center}
  \begin{pspicture}(15,3)(0,-4)

      \begin{psclip}{\pspolygon[linewidth=0](0,0)(3,3)(7,-1)(4,-4)}
       \multiput(1,-1)(1,-1){3}{\psline(0,0)(3,3)}
       \multiput(1,1)(1,1){2}{\psline(0,0)(4,-4)}
    \multiput(0,-6)(0,2){8}{
    \psline(0,0)(8,0)
    }
    \multiput(0,-6)(2,0){8}{
    \psline(0,0)(0,8)
    }

    \multiput(1,-5)(0,2){8}{
    \psline[linestyle=solid](0,0)(8,0)
    }
    \multiput(1,-5)(2,0){8}{
    \psline[linestyle=solid](0,0)(0,8)
    }
   \end{psclip}

   \psline[linecolor=blue,linewidth=0.25]{*-*}(0,0)(1,1)
   \psline[linecolor=blue,linewidth=0.25]{*-*}(3,3)(4,2)
   \psline[linecolor=blue,linewidth=0.25]{*-*}(5,-1)(4,-2)
   \psline[linecolor=blue,linewidth=0.25]{*-*}(4,-4)(5,-3)

   \psset{dotsize=.35}

   \psline[linecolor=cyan,linewidth=0.25]{*-*}(1,0)(2,0)
   \psline[linecolor=cyan,linewidth=0.25]{*-*}(4,0)(4,-1)
   \psline[linecolor=cyan,linewidth=0.25]{*-*}(5,0)(6,0)
   \psline[linecolor=cyan,linewidth=0.25]{*-*}(2,1)(2,2)
   \psline[linecolor=cyan,linewidth=0.25]{*-*}(2,1)(2,2)
   \psline[linecolor=cyan,linewidth=0.25]{*-*}(3,1)(3,2)
   \psline[linecolor=cyan,linewidth=0.25]{*-*}(4,1)(5,1)
   \psline[linecolor=cyan,linewidth=0.25]{*-*}(2,-1)(1,-1)
   \psline[linecolor=cyan,linewidth=0.25]{*-*}(3,0)(3,-1)
   \psline[linecolor=cyan,linewidth=0.25]{*-*}(6,-1)(7,-1)
   \psline[linecolor=cyan,linewidth=0.25]{*-*}(6,-2)(5,-2)
   \psline[linecolor=cyan,linewidth=0.25]{*-*}(3,-3)(4,-3)
   \psline[linecolor=cyan,linewidth=0.25]{*-*}(2,-2)(3,-2)

   \put(3,2){\vblock}
   \put(2,1){\vblock}
   \put(4,1){\hblock}
   \put(1,0){\hblock}
   \put(3,0){\vblock}
   \put(5,0){\hblock}
   \put(2,-1)\hblock
   \put(4,-1)\vblock
   \put(6,-1)\hblock
   \multiput(3,-2)(2,0){2}{
   \hblock
   }
   \put(4,-3)\hblock

   \put(8,0)
   {
      \begin{psclip}{\pspolygon[linewidth=0](0,0)(3,3)(7,-1)(4,-4)}
       \psset{linewidth=0.01}
       \multiput(1,-1)(1,-1){3}{\psline(0,0)(3,3)}
       \multiput(1,1)(1,1){2}{\psline(0,0)(4,-4)}
    \multiput(0,-6)(0,2){8}{
    \psline(0,0)(8,0)
    }
    \multiput(0,-6)(2,0){8}{
    \psline(0,0)(0,8)
    }
    \multiput(1,-5)(0,2){8}{
    \psline[linestyle=solid](0,0)(8,0)
    }
    \multiput(1,-5)(2,0){8}{
    \psline[linestyle=solid](0,0)(0,8)
    }
   \end{psclip}


   \psset{dotsize=.35}


   \put(3,2){\vblock}
   \put(2,1){\vblock}
   \put(4,1){\hblock}
   \put(1,0){\hblock}
   \put(3,0){\vblock}
   \put(5,0){\hblock}
   \put(2,-1)\hblock
   \put(4,-1)\vblock
   \put(6,-1)\hblock
   \multiput(3,-2)(2,0){2}{
   \hblock
   }
   \put(4,-3)\hblock

   \psline[linewidth=0.1](0,0)(2,0)(2,2)
   \psline[linewidth=0.1,linestyle=dashed](1,-1)(3,-1)(3,3)
   \psline[linewidth=0.1,linestyle=dashed](3,1)(5,1)
   \psline[linewidth=0.1](2,-2)(6,-2)
   \psline[linewidth=0.1](4,-2)(4,0)(6,0)
   \psline[linewidth=0.1,linestyle=dashed](5,-1)(7,-1)
   \psline[linewidth=0.1,linestyle=dashed](3,-3)(5,-3)
   \multiput(0,0)(1,1){4}{
   \multiput(0,0)(1,-1){5}{
   \pscircle[fillstyle=solid,fillcolor=white](0,0){.1}
   }
   }
   }

  \end{pspicture}
 \end{center}
 \caption{Left: Slit-curves generated by a dimer covering.
 Right: Primary forest (solid) and dual forest (dashed) }
 \label{fig:slit-curves}
\end{figure}
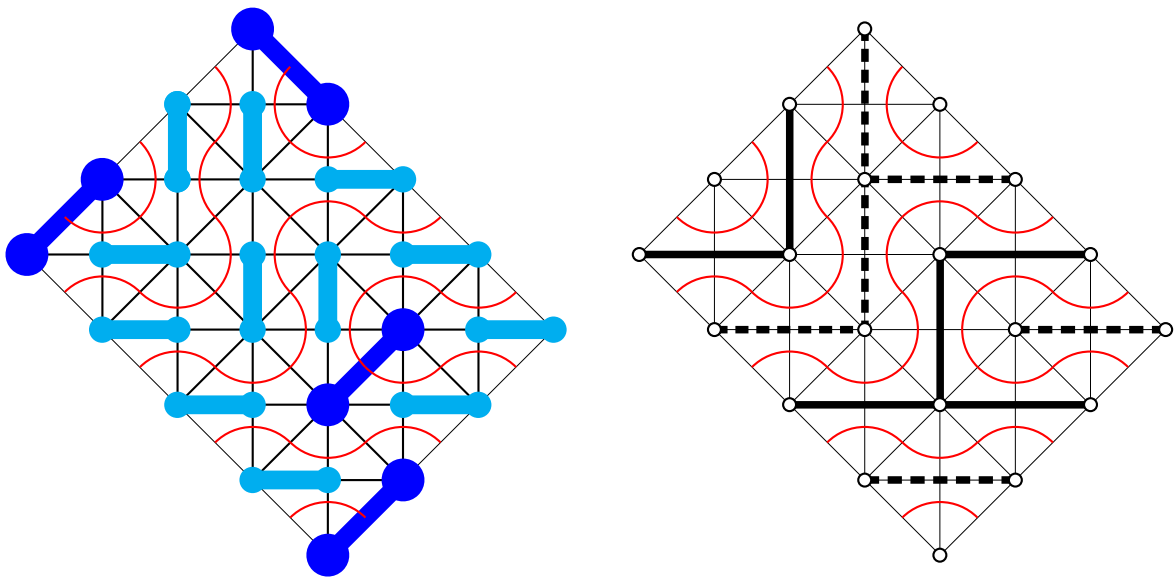

When a $t$-move $\tau$ transforms a dimer covering 
$M$ into another dimer covering $M'$ by
replacing an impurity $e$ with another impurity $e'$,
we simply say $\tau$ transforms $e$ to $e'$ and simply denote as
\[
 e'=\tau(e).
\]
\begin{prop}
 \label{prop:tkeepsslit}
A $t$-move  keeps all slit-curves unchanged.
There exists a sequence of $t$-moves which transforms
an impurity $e$ to another impurity $e'\in E_2$ if and only if
$e$ and $e'$ intersects with a common slit-curve.
\end{prop}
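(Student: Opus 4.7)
The plan is to analyze how a $t$-move affects the underlying arcs, and then to identify the orbits of impurities under $t$-moves with the sets of diagonal edges met by a common slit-curve.

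For the first assertion, I would observe that a $t$-move replaces $\{a,b\},\{c,d\}\in M$ by $\{b,c\},\{d,a\}$, so the only vertex whose $M$-partner changes is the black vertex $d$: before, $d$ is matched to $c$; afterwards, to $a$. Since the drawing rule for arcs at any black vertex depends only on which white neighbor it is matched to, the only arcs that could change are those in the diamond around $d$. With the counter-clockwise labeling $w_1=a,\,w_2=b,\,w_3=c,\,w_4$ around $d$, the defining rule implies that the arc at $w_i$ in this diamond is drawn precisely when $d$'s partner lies in the parity class opposite to the one containing $w_i$. As $a=w_1$ and $c=w_3$ both belong to the common parity class $\{w_1,w_3\}$, switching $d$'s partner from $c$ to $a$ leaves every arc, and hence every slit-curve, unchanged.

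For the forward implication of the equivalence, I would show that a single $t$-move advances the impurity by one station along its slit-curve. In the notation above, the old impurity $\{a,b\}=\{w_1,w_2\}$ and the new impurity $\{b,c\}=\{w_2,w_3\}$ are the two diagonal edges of the $d$-diamond incident to $w_2=b$. Both arcs centered at $w_2$ associated with these edges are drawn (for $d$'s partner lies in the parity class opposite to $b$) and meet at a common point of the edge $\{b,d\}$, so that they join the midpoints of $\{a,b\}$ and $\{b,c\}$ into a single piece of a slit-curve. Iterating, any two impurities linked by a sequence of $t$-moves must intersect a common slit-curve.

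For the converse, given impurities $e$ and $e'$ on the same slit-curve $\gamma$, I would push the impurity along $\gamma$ one station at a time. Suppose the current impurity is $e=\{a,b\}$ and $\gamma$ continues from the midpoint of $e$ through the diamond of some black vertex $d$, via an arc at $b$, to the midpoint of a diagonal edge $\{b,c\}$. The presence of this arc forces $d$ to be matched to a vertex in the parity class of $\{a,c\}$; but $a$ is already matched to $b$, so necessarily $\{c,d\}\in M$, which is exactly the dimer datum required for a $t$-move. Performing it converts the impurity to $\{b,c\}$ while preserving every slit-curve, advancing the impurity by one station along $\gamma$. A finite iteration in the direction of $e'$ connects $e$ to $e'$.

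The main obstacle is the parity-class bookkeeping in the first paragraph: the non-obvious step is to verify that no arc outside the $d$-diamond can change and that inside the $d$-diamond the configurations before and after produce exactly the same set of drawn arcs, which hinges on $a$ and $c$ lying in the same parity subclass of $d$'s white neighbors. Once this local invariance is secured, identifying each $t$-move with a single step along a slit-curve makes the rest of the equivalence routine.
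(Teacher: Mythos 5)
Your proof is correct and follows essentially the same route as the paper's (much terser) argument: local invariance of the arcs under a $t$-move via the parity-class observation, then stepping the impurity station-by-station along the common slit-curve. One small verbal slip: all four of $a,b,c,d$ change partners under a $t$-move, not only $d$; what you actually need (and do use) is that $d$ is the only \emph{black} vertex whose partner changes, since the arcs are determined by the black vertices' partners alone.
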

\begin{proof}
 From the definition, a $t$-move clearly keeps slit-curves
 unchanged.
 It is clear that an impurity  $e$ can be transformed to $e'$
 by one $t$-move if and only if $e$ and $e'$ have a common terminal vertex
 and there exists a slit-curve
 which intersects with  both of $e$ and $e'$.
 Now the last statement can be easily proved by the induction on the
 length of the portion of the slit-curve from $e$ to $e'$.
\end{proof}

\begin{cor}
 \label{cor:slit-term}
 If a slit-curve does not terminate on a diagonal edge,
 it does not intersects with impurities.
 For each slit-curve $C$, there is at most one impurity
 intersecting with $C$.
\end{cor}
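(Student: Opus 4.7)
The plan is to deduce both parts of Corollary~\ref{cor:slit-term} from Proposition~\ref{prop:tkeepsslit}. The central observation is that, by Proposition~\ref{prop:tkeepsslit}, each $t$-move leaves all slit-curves unchanged and moves exactly one impurity to an adjacent diagonal edge lying on the same slit-curve. Consequently, for each slit-curve $C$, the number
$$
n(C,M):=|\{e\in M\cap E_2 : e\cap C\neq\emptyset\}|
$$
is invariant under $t$-moves, so it is a property of the $t$-equivalence class of $M$.

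For the second statement, I will argue by contradiction. Suppose two distinct impurities $e,e'\in M$ both intersect $C$. Proposition~\ref{prop:tkeepsslit} provides a sequence of $t$-moves sending $e$ to $e'$, and the moving impurity traces a path along $C$. The step of the sequence that would slide the moving impurity onto the slot $e'$ is blocked, because the $t$-move requires a non-diagonal dimer at the shared endpoint of the two adjacent diagonals, and that endpoint is already matched via the existing impurity $e'$. The only way to proceed is first to displace $e'$ along $C$; but by Proposition~\ref{prop:tkeepsslit} this keeps $e'$ on $C$, so the same obstruction recurs on a strictly shorter sub-arc of $C$ between two impurities. An induction on the combinatorial length of the shortest sub-arc of $C$ separating two impurities then yields the contradiction.

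For the first statement, I will first note that a slit-curve can only terminate at a midpoint of a diagonal edge: at every other arc-endpoint, namely a point on a square-lattice edge $\{w,b\}$, a second arc around $w$ is always present (the two arcs around $w$ on the $b$-side are either both drawn or both absent), so the slit-curve continues through such a point. Hence a slit-curve that does not terminate on any diagonal edge must be a closed loop. For such a closed $C$, I apply Equation~(\ref{eq:numhyp}) to the subgraph induced by the vertices strictly enclosed by $C$. Since a square-lattice dimer cannot cross $C$ (an arc meeting a square-lattice edge forces the dimer at the adjacent black vertex to be perpendicular to that edge, precluding the edge itself from being a dimer), the only edges that cross $C$ are diagonal impurities; the count then gives
$$
|W_{\mathrm{in}}|-|B_{\mathrm{in}}|=2\,k_{\mathrm{in}}+n(C,M),
$$
with $k_{\mathrm{in}}$ the number of impurities strictly inside $C$. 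A structural argument, decomposing the interior of $C$ into elementary regions determined by the arcs of $C$ and checking that each such region contributes equally to $|W_{\mathrm{in}}|$ and $|B_{\mathrm{in}}|$, yields $|W_{\mathrm{in}}|=|B_{\mathrm{in}}|$; combined with the displayed equation this forces $n(C,M)=0$.

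The hardest step I anticipate is the balance identity $|W_{\mathrm{in}}|=|B_{\mathrm{in}}|$ for a closed slit-curve. Making this precise requires a careful local bookkeeping of how the arcs of $C$ pair up enclosed white and black vertices; I expect this can be done by following $C$ arc-by-arc and tracking which vertex each arc separates from the interior, but the topological/combinatorial details are the main technical obstacle.
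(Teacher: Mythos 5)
Your treatment of the second statement is essentially the paper's: Proposition~\ref{prop:tkeepsslit} supplies a sequence of $t$-moves carrying $e$ to $e'$, each move displaces only the travelling impurity, and the final step is impossible because the endpoints of $e'$ are already covered by the stationary impurity. (Your detour about ``first displacing $e'$'' is not needed, and your induction measure does not actually decrease when $e'$ is pushed away from $e$; but the core contradiction is the right one.)

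The first statement is where the proposal has a genuine gap, in two ways. First, the reduction ``no termination on a diagonal edge $\Rightarrow$ closed loop'' is false. The pairing of the two arcs around $w$ that end on $\{w,b\}$ requires \emph{both} diagonal edges at $w$ flanking $b$ to be present in $G$; the paper explicitly allows $w_3,w_4\notin G$, and near the boundary of a normal graph only one of the two candidate arcs may exist, so a slit-curve can terminate at a point of an edge of $E_1$. This boundary case is exactly what the first statement is about and exactly what the paper's proof treats (Figure~\ref{fig:nonimpslit}): such a termination forces the dimer $\{v_2,v_3\}$ into every covering with these slit-curves, so the diagonal $\{v_1,v_3\}$ lying on the curve can never receive an impurity, contradicting Proposition~\ref{prop:tkeepsslit} if the curve met an impurity at all. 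Your proposal defines this case out of existence and never addresses it. Second, the balance identity $|W_{\mathrm{in}}|=|B_{\mathrm{in}}|$ that you flag as the main technical obstacle is not merely hard --- it is false. As the paper shows in (the later) Proposition~\ref{prop:slit-curve}, the region enclosed by an innermost closed slit-curve induces a tree with no black leaves, hence with \emph{one more} white vertex than black vertex; a closed slit-curve would therefore be forced to intersect an impurity, the opposite of what your computation is meant to give. Closed slit-curves are then excluded altogether by a separate count of the vertices adjacent to the loop from outside, so the loop case should be handled by that proposition rather than by a balance argument.
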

\begin{proof}
 Assume that a slit-curve terminates on an edge $\{v_1,v_2\}\in E_1$ as shown in
 Figure $\ref{fig:nonimpslit}$.
 Then the dimer $\{v_2,v_3\}$ must be contained in the dimer covering
 which generate the slit-curves.
 For the sake of contradiction, assume that
 an impurity intersects with this slit-curve.
 The impurity can not be transformed to $\{v_1,v_3\}$ in the figure
 since the vertex $v_3$ must be incident to a dimer $\{v_2,v_3\}$,
 which contradicts Proposition $\ref{prop:tkeepsslit}$.
 The last statement follows from
 Proposition $\ref{prop:tkeepsslit}$ and the fact that
 a $t$-move can  transform only one impurity.
  
 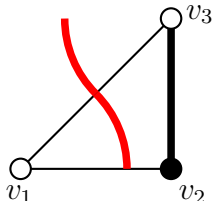
\begin{figure}[H]
  \begin{center}
   \begin{pspicture}(2,2)(0,0)
    \psline[]{*-*}(0,0)(2,0)
    \psline[]{*-*}(2,0)(2,2)
    \psline[]{*-*}(0,0)(2,2)
    \psline[linewidth=.1]{*-*}(2,0)(2,2)
    \pscircle[fillstyle=solid,fillcolor=white](0,0){.15}
    \pscircle[fillstyle=solid,fillcolor=white](2,2){.15}
    \pscircle[fillstyle=solid,fillcolor=black](2,0){.15}
    \psarc[linewidth=.1,linecolor=red](0,0){1.414}{0}{45}
    \psarc[linewidth=.1,linecolor=red](2,2){1.414}{180}{225}
    \put(-0.2,-0.4){$v_1$}
    \put(2.1,-.4){$v_2$}
    \put(2.2,2){$v_3$}
   \end{pspicture}
   \caption{A slit-curve terminating on an edge in $E_1$}
   \label{fig:nonimpslit}
  \end{center}
 \end{figure}
\end{proof}

\begin{prop}
 \label{prop:slit-curve}
 \begin{enumerate}
  \item \label{prop:slit-curve-1}A slit-curve does not intersects with another slit-curve.
  \item A slit-curve does not form a loop.
 \end{enumerate}
\end{prop}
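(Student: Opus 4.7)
My plan is local analysis. Each arc of a slit-curve is a $\pi/4$-radian arc of a circle of radius $1/\sqrt{2}$ centered at a white vertex of $\Gamma$. Since distinct white vertices are at distance at least $\sqrt{2}$, two such circles meet in at most one point, and when they do it is the midpoint of the diagonal edge joining the two whites. Thus two arcs with distinct centers can meet only at their designated endpoints: either at the midpoint of a diagonal edge, or at a ``special point'' on a non-diagonal edge $\{w,b\}$ lying at distance $1/\sqrt{2}$ from $w$. I will verify, by case analysis on the dimer configuration at the relevant black, that at each such point exactly $0$ or exactly $2$ arcs meet, and whenever two meet they share a common tangent (the tangent line being perpendicular to the radius from either center). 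Hence each slit-curve is a smooth curve of local valence $2$ at every interior point, and so distinct slit-curves cannot cross.

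\textbf{Part 2 plan.} I plan to argue by contradiction, combining a combinatorial observation about crossings with the turning tangent theorem. Assume $C$ is a slit-curve loop bounding a region $R$. By the defining rule for arcs, a non-diagonal edge $\{w,b\}$ is cut by $C$ only at its special point, and this happens only when the dimer at $b$ is perpendicular to $\{w,b\}$; in particular $\{w,b\}$ itself is not a dimer. Consequently every dimer of $M$ that crosses $C$ is a diagonal impurity, every black in $R$ is matched inside $R$, and counting vertices gives
\[
|W_R|-|B_R| \;=\; 2 d_2 + d_c,
\]
where $d_2$ and $d_c$ are the numbers of impurities entirely inside $R$ and crossing $C$ respectively. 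At the same time, since each arc is a $\pi/4$ circular arc, the turning tangent theorem applied to the smooth simple closed curve $C$ (traversed counter-clockwise) gives $n_{\mathrm{in}}-n_{\mathrm{out}}=8$, where $n_{\mathrm{in}}$ and $n_{\mathrm{out}}$ count the arcs of $C$ whose center lies inside and outside $R$. In particular $n_{\mathrm{in}}\ge 8$.

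\textbf{Main obstacle.} The delicate step is converting $n_{\mathrm{in}}\ge 8$ into a matching contradiction. My plan relies on a local obstruction that I would first establish: if \emph{all} eight arcs around some white $w$ are centered at $w$, then each of $w$'s four black neighbors carries a dimer perpendicular to the $w$-to-$b$ direction, and a direct check of the resulting 4-cycle matching between $w$'s four black neighbors and the four $W_1$-diagonal corners of $w$ leaves $w$ itself without any legal partner, contradicting the existence of $M$. Hence no single white can contribute all eight of its potential arcs to $C$, so $n_{\mathrm{in}}\le 7|\{w\in W_R:\text{some arc of }C\text{ is centered at }w\}|$. To upgrade this per-vertex bound to a full contradiction with $n_{\mathrm{in}}\ge 8$, I would either (i) sharpen the local obstruction by tracking how a white $w\in R$ matched to another vertex of $R$ further caps the number of its arcs on $C$, or (ii) run an extremal/inductive argument that isolates some $w\in R$ forced into the $8$-arc configuration. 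This last combinatorial packaging is where the most work will be needed, and is plausibly where the primary-/dual-forest interpretation suggested by Figure~\ref{fig:slit-curves} would provide a cleaner route by directly identifying $C$ as the boundary of a component of a spanning forest, which cannot close up inside a simply connected $G$.
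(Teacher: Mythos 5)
Your Part 1 is fine and is essentially a fleshed-out version of what the paper dismisses as immediate from the definition: at every potential meeting point (a midpoint of a diagonal edge, or the distance-$1/\sqrt 2$ point of an edge $\{w,b\}$) either zero or two arcs appear and they join tangentially, so distinct slit-curves cannot cross.

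Part 2, however, is not a proof, and the gap you flag yourself is a real one. The turning-tangent count $n_{\mathrm{in}}-n_{\mathrm{out}}=8$ together with your (correct) local obstruction ``no white carries all eight of its arcs'' only caps the contribution of each white at seven arcs, which is compatible with $n_{\mathrm{in}}\ge 8$ as soon as two or more whites inside contribute arcs; neither of your proposed repairs is carried out, and the counting identity $|W_R|-|B_R|=2d_2+d_c$ by itself excludes nothing, since a loop with $d_2>0$ or $d_c>0$ is not yet contradicted. Worse, the fallback you suggest --- identifying $C$ with the boundary of a component of the spanning forest of Proposition~\ref{prop:curve-forrest} --- is circular: in the paper that forest statement is \emph{deduced from} the no-loop property you are trying to prove. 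The paper closes the argument quite differently, entirely combinatorially: take an innermost loop $C$; the graph induced inside $C$ is a tree whose black vertices all have degree two (a black vertex keeps exactly the two edges parallel to its dimer), so it has $n$ white and $n-1$ black vertices, forcing some white inside to be matched across $C$ by an impurity; on the other hand, every black vertex $b$ just outside $C$ has its dimer perpendicular to the edge of $b$ crossed by $C$, hence is matched to a white just outside $C$, and the count $|B_C|=|W_C|$ then shows every white adjacent to the inside is already matched outside --- contradicting the existence of the crossing impurity. Some such global matching argument along the boundary ring (or an equivalent parity argument) is exactly the ingredient your proposal is missing.
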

\begin{proof}
 From the definition of the slit-curves, $\ref{prop:slit-curve-1}$ is
 obvious.
 For the sake of the contradiction, assume that a slit-curve $C$ form a
 loop. Without loss of generality we may assume
 $C$ is the innermost loop.
 Then the subgraph of $G$ induced by the vertices contained inside $C$
 is a tree, since otherwise a slit-curve exists inside $C$ and
 it must form a loop.
 Since the tree $T$ inside $C$ can not have black leaves, i.e.,
 black vertices which is incident to only one edge of $T$  and
 hence $T$ has odd number of
 vertices, $n$ whites and $n-1$ blacks.
 Thus $C$ must intersect with an impurity.
 Let us denote by $B_C$ (resp. $W_C$) the black (resp. white)
 vertices which are outside of $C$ and adjacent to vertices inside $C$.
 Then by the induction on the number of white vertices
 inside $C$, we have $|B_C|=|W_C|$. (See Figure $\ref{fig:slit-loop}$.)
 Let $b\in B_C$. Then there exists exactly one vertex $w\in W_C$ such
 that $\{b,w\}\in M$.
 Therefore every element of $W_C$ is incident to a dimer outside
 of $C$, which is a contradiction.
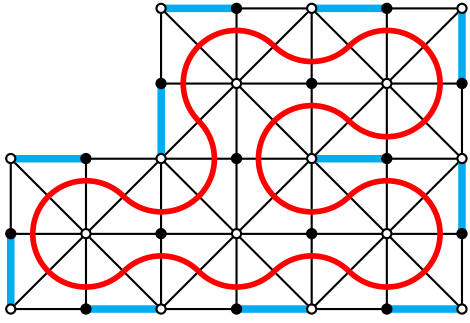
\begin{figure}[H]
 \begin{center}
  \begin{pspicture}(7,4)

   \psline(0,0)(6,0)
   \psline(0,1)(6,1)
   \psline(0,2)(6,2)
   \psline(2,3)(6,3)
   \psline(2,4)(6,4)
   \psline(0,0)(0,2)
   \psline(1,0)(1,2)
   \multiput(2,0)(1,0){5}{\psline(0,0)(0,4)}
   \multiput(0,0)(2,0){2}{\psline(0,0)(4,4)}
   \psline(4,0)(6,2)
   \psline(2,0)(0,2)\psline(4,0)(2,2)\psline(6,0)(2,4)\psline(6,2)(4,4)

   \multiput(0,0)(2,0){3}{\psline[linecolor=cyan,linewidth=0.1](1,0)(2,0)}
   \multiput(6,0)(0,2){2}{\psline[linecolor=cyan,linewidth=0.1](0,1)(0,2)}
   \multiput(6,4)(-2,0){2}{\psline[linecolor=cyan,linewidth=0.1](-1,0)(-2,0)}
   \psline[linecolor=cyan,linewidth=0.1](2,3)(2,2)
   \psline[linecolor=cyan,linewidth=0.1](1,2)(0,2)
   \psline[linecolor=cyan,linewidth=0.1](0,0)(0,1)
   \psline[linecolor=cyan,linewidth=0.1](4,2)(5,2)

   \psarc[linewidth=0.075,linecolor=red](1,1){0.707}{45}{315}
   \psarc[linewidth=0.075,linecolor=red](2,0){0.707}{45}{135}
   \psarc[linewidth=0.075,linecolor=red](3,1){0.707}{225}{315}
   \psarc[linewidth=0.075,linecolor=red](4,0){0.707}{45}{135}
   \psarc[linewidth=0.075,linecolor=red](5,1){0.707}{-135}{135}
   \psarc[linewidth=0.075,linecolor=red](4,2){0.707}{45}{315}
   \psarc[linewidth=0.075,linecolor=red](5,3){0.707}{-135}{135}
   \psarc[linewidth=0.075,linecolor=red](4,4){0.707}{225}{315}
   \psarc[linewidth=0.075,linecolor=red](3,3){0.707}{45}{225}
   \psarc[linewidth=0.075,linecolor=red](2,2){0.707}{-135}{45}
   \multiput(0,0)(0,2){2}{
   \multiput(0,0)(2,0){4}{\pscircle[fillstyle=solid,fillcolor=white](0,0){.075}}
   }
   \multiput(2,4)(2,0){3}{\pscircle[fillstyle=solid,fillcolor=white](0,0){.075}}
   \multiput(1,1)(2,0){3}{\pscircle[fillstyle=solid,fillcolor=white](0,0){.075}}
   \multiput(3,3)(2,0){2}{\pscircle[fillstyle=solid,fillcolor=white](0,0){.075}}

   \multiput(0,0)(0,2){2}{
   \multiput(1,0)(2,0){3}{\pscircle[fillstyle=solid,fillcolor=black](0,0){.075}}
   }
   \multiput(3,4)(2,0){2}{\pscircle[fillstyle=solid,fillcolor=black](0,0){.075}}
   \multiput(0,1)(2,0){4}{\pscircle[fillstyle=solid,fillcolor=black](0,0){.075}}
   \multiput(2,3)(2,0){3}{\pscircle[fillstyle=solid,fillcolor=black](0,0){.075}}

  \end{pspicture}
  \caption{If a slit-curve $C$ formed  a loop, there would be an odd number
  of vertices 
  inside $C$ and no vacant vertex around $C$.}
  \label{fig:slit-loop}
 \end{center}
\end{figure}
\end{proof}
\begin{prop}
 \label{prop:curve-forrest}
Remove the edges of $G$ which intersect with slit-curves.
Then, each of the connected components of the resulting graph is a tree,
 that is, the slit-curves determine a spanning forest of $G$.
\end{prop}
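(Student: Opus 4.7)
The plan is to argue by contradiction: I will assume that $H$, the graph obtained from $G$ by deleting every edge that intersects some slit-curve, contains a simple cycle $C$, and derive a contradiction from Proposition \ref{prop:slit-curve} together with a local analysis of where slit-curves can terminate.

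First I observe that no diagonal edge can survive in $H$. For any diagonal $\{w_1,w_2\}\in E_2$, each of the two triangular faces of $\Gamma$ sharing this diagonal contains a unique arc whose endpoint lies at the midpoint of $\{w_1,w_2\}$, so some slit-curve passes through that midpoint and the diagonal is deleted. Consequently $C$ uses only square-lattice edges, and the planar region $R$ bounded by $C$ is a rectilinear lattice polygon of integer area at least one; in particular the interior of $R$ contains at least one unit square of $\mathbb{Z}^2$ together with the diagonal edge in $E_2$ of that square. Moreover, because $G$ is simply connected in the sense of the paper (so $\Gamma\setminus G$ is connected and $G$ has no holes), $R$ is a sub-disk of $G$.

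Because no edge of $C$ is intersected by any slit-curve, no slit-curve crosses $C$, so each slit-curve lies entirely inside $R$ or entirely in the exterior of $\bar R$. The interior diagonal exhibited above is crossed at its midpoint by some slit-curve $\gamma$, which therefore lies entirely inside $R$. By Proposition \ref{prop:slit-curve} this $\gamma$ is a simple arc and not a loop, so it has two endpoints, each lying on an edge of $G$ that $\gamma$ intersects. The main geometric step is to verify that a slit-curve can only terminate at an edge on the boundary of $G$: at any interior diagonal both adjacent triangles contribute arcs meeting at its midpoint, so the slit-curve passes through; and at any interior square edge $\{w,b\}$, the arcs in the two adjacent triangles are both determined by the single dimer incident to $b$, so they either both exist and meet at the corresponding point on $\{w,b\}$ (and the slit-curve passes through), or both fail to reach $\{w,b\}$ (and no slit-curve touches the edge at all).

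To finish, I note that the interior of $R$ sits inside the topological disk $G$ and so contains no boundary edge of $G$; any boundary edge of $G$ lying in $\bar R$ must therefore lie on $C$ itself. But $C\subset H$, so no slit-curve intersects an edge of $C$, and in particular $\gamma$ cannot terminate there. This leaves $\gamma$ with no admissible endpoint, yielding the desired contradiction and proving that $H$ is a forest. The main obstacle in the plan is the local verification that slit-curves do not terminate in the interior of $G$, which requires a careful case-by-case inspection of the arc definition around each black vertex; once that is granted, the area bound for lattice cycles and Proposition \ref{prop:slit-curve} close the argument quickly.
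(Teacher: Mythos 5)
Your argument is correct and takes essentially the same route as the paper: the paper's one-line proof also derives a contradiction with Proposition \ref{prop:slit-curve} from a putative cycle, and your additional steps (slit-curves trapped inside the cycle, no termination in the interior) are exactly what that one-liner leaves implicit. The only loose point is the blanket claim that every diagonal edge is deleted---for a boundary diagonal one of the two adjacent black vertices may lie outside $G$, so only a single arc (a terminating slit-curve) meets its midpoint, and in a degenerate case none does---but since termination on an edge counts as intersection in the paper's usage, and normality of $G$ forces the black corner enclosed by your cycle to belong to $G$, the claim holds for the edges where you actually use it.
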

\begin{proof}
 If a connected component of the graph obtained by removing
 edges intersecting with slit-curves contains a loop,
 then a slit-curve must form a loop, which contradicts
 Proposition $\ref{prop:slit-curve}$.
\end{proof}

Each tree in the forest obtained by removing edges intersecting with
 slit-curves does not contain a path of the form like 
\begin{pspicture}(0.6,0.5)(-0.1,0)
 \psline(0,0)(0.5,0)(0.5,0.5)
 \pscircle[fillstyle=solid,fillcolor=white](0,0){.075}
 \pscircle[fillstyle=solid,fillcolor=black](0.5,0){.075}
 \pscircle[fillstyle=solid,fillcolor=white](.5,0.5){.075}
\end{pspicture}
, i.e., a path bended at a black vertex, 
hence it can be viewed as a tree in $\Lambda$ or $\Lambda^\perp$,
by removing the edges incident to black leaves, i.e., the 
black vertices each of which is incident
to exactly one edge in the tree.
We call the set of these trees in $\Lambda$ 
(resp. $\Lambda^\perp$) the {\em primary  forest}
(resp. dual forest) 
obtained from $M$ and denote it by $F(M)$ (resp. $F^\perp(M)$).

 Let  ${\mathcal M}(G)$ denote
 the set of dimer coverings of a normal graph $G$.
 We introduce a relation $\stackrel{t}{\sim}$ on ${\mathcal M}(G)$
 as follows:
 For two dimer coverings $M_1, M_2\in{\mathcal M}(G)$,  we define
 \[
  M_1\stackrel{t}{\sim} M_2
 \]
 if $M_1$ can be transformed into $M_2$ by applying
 a sequence of $t$-moves.
 Then the relation $\stackrel{t}{\sim}$ is clearly a equivalence
 relation and the set of equivalence classes
 are denoted by ${\mathcal M}(G)/\stackrel{t}{\sim}$.

\section{Configurations with only one impurity}
\label{sec:oneimpurity}
In this section, we show an exact enumerative formula
of dimer coverings of graphs of special shapes, each 
of which has dimer coverings with only one impurity.

\subsection{Temperley bijection}
Let $H$ be a simply connected subgraph of $\Lambda$.
We define the dual graph $H^\perp$ of $H$ in the following way.
$H^\perp$ has the vertices $V(H^\perp)$  consisting of vertices
corresponding to the faces of $H$, more specifically,
\[
 V(H^\perp)= \{(x,y)\in W_1=(1,1)+2{\mathbb Z}\times2{\mathbb Z}~|~
 (x,y) \mbox{ is in a face of }H\}\cup\{f^*\},
\]
where $f^*$ is the vertex 
taken from $W_1$ so that the graph induced by $V(H^\perp)$
in $\Lambda^\perp$ is simply connected in $\Lambda$.
Let $d^*$ be the number of edges $l_i$ of $\Lambda$ which connect
$f^*$ and other vetices in $V(H^\perp)$.
Then, since $H$ is simply connected, $d^*$ is at most $3$.
$H^\perp$ has edges $E(H^\perp)$ each of which corresponds to an edge
bounding a face of $H$.
Then, by embedding $H$ and $H^\perp$ simultaneously into the plane
so that each edge $e^\perp$ of $H^\perp$ crosses the corresponding edge
$e$ of $H$ only once at the middle point of $e$,
where we add a new black vertex,
and we obtain a bipartite graph $N'$.

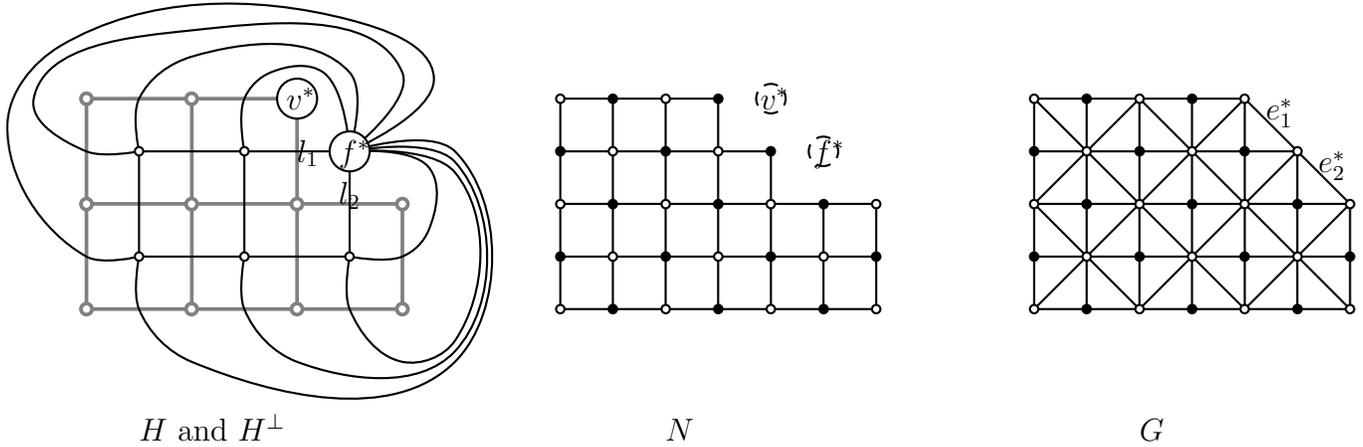
\begin{figure}[H]
 \begin{center}
  \begin{pspicture}(12,4)(0,-2)
   \psset{unit=.7}
   \put(-2,0){

   {\psset{linewidth=.07,linecolor=gray}
   \psline(0,0)(6,0)
   \psline(0,2)(6,2)
   \psline(0,4)(4,4)
   \psline(0,0)(0,4)
   \psline(2,0)(2,4)
   \psline(4,0)(4,4)
   \psline(6,0)(6,2)
   \multiput(0,0)(2,0){4}{
   \pscircle[fillstyle=solid,fillcolor=white](0,0){.14}
   }
   \multiput(0,2)(2,0){4}{
   \pscircle[fillstyle=solid,fillcolor=white](0,0){.14}
   }
   \multiput(0,4)(2,0){3}{
   \pscircle[fillstyle=solid,fillcolor=white](0,0){.14}
   }
   }
   \pscircle[fillstyle=solid,fillcolor=white](4,4){.4}
   \put(3.8,3.8){$v^*$}

   \put(1,1){
   \psline(0,0)(4,0)
   \psline(0,2)(4,2)
   \psline(0,0)(0,2)
   \psline(2,0)(2,2)
   \psline(4,0)(4,2)
   \pscurve(0,0)(0,-1)(1,-2)(6,-2)(6,2)(4,2)
   \pscurve(2,0)(2,-1)(3,-2)(6,-1.8)(6,1.8)(4,2)
   \pscurve(4,0)(4,-1)(5,-2)(6,-1.6)(6,1.6)(4,2)
   \pscurve(4,0)(5,0)(5.5,.5)(5.5,1.7)(4,2)
   \pscurve(2,2)(2,3)(2.5,3.5)(3.5,3.5)(4,2)
   \pscurve(0,2)(0,3)(1,3.8)(4,3.5)(4,2)
   \pscurve(0,2)(-1,2)(-2,3)(1,4.3)(5,3.5)(4,2)
   \pscurve(0,0)(-1,0)(-2.5,2.5)(-1.5,4)(4,4.5)(5.6,3.4)(4,2)
   \multiput(0,0)(2,0){3}{
   \pscircle[fillstyle=solid,fillcolor=white](0,0){.1}
   }
   \multiput(0,2)(2,0){3}{
   \pscircle[fillstyle=solid,fillcolor=white](0,0){.1}
   }
   \pscircle[fillstyle=solid,fillcolor=white](4,2){.4}
   \put(3.8,1.8){$f^*$}
   \put(3.,1.8){$l_1$}
   \put(3.8,1.){$l_2$}

   \put(0,-3.5){$H$ and $H^\perp$}

   }

   \put(9,0){
   \psline(0,0)(6,0)
   \psline(0,2)(6,2)
   \psline(0,4)(3,4)
   \psline(0,0)(0,4)
   \psline(2,0)(2,4)
   \psline(4,0)(4,3)
   \psline(6,0)(6,2)
   \multiput(0,0)(2,0){4}{
   \pscircle[fillstyle=solid,fillcolor=white](0,0){.1}
   }
   \multiput(0,2)(2,0){4}{
   \pscircle[fillstyle=solid,fillcolor=white](0,0){.1}
   }
   \multiput(0,4)(2,0){2}{
   \pscircle[fillstyle=solid,fillcolor=white](0,0){.1}
   }
   \pscircle[linestyle=dashed,fillstyle=solid,fillcolor=white](4,4){.3}
   \put(3.8,3.8){$v^*$}

   \put(1,1){
   \pscircle[linestyle=dashed,fillstyle=solid,fillcolor=white](4,2){.3}
   \put(3.8,1.8){$f^*$}
   \psline(-1,0)(5,0)
   \psline(-1,2)(3,2)
   \psline(0,-1)(0,3)
   \psline(2,-1)(2,3)
   \psline(4,-1)(4,1)
   \multiput(0,0)(2,0){3}{
   \pscircle[fillstyle=solid,fillcolor=white](0,0){.1}
   }
   \multiput(0,2)(2,0){2}{
   \pscircle[fillstyle=solid,fillcolor=white](0,0){.1}
   }
   }
   \multiput(0,0)(0,2){2}{
   \multiput(1,0)(2,0){3}{
   \pscircle[fillstyle=solid,fillcolor=black](0,0){.1}
   }
   }
   \multiput(0,1)(2,0){4}{
   \pscircle[fillstyle=solid,fillcolor=black](0,0){.1}
   }
   \multiput(0,3)(2,0){3}{
   \pscircle[fillstyle=solid,fillcolor=black](0,0){.1}
   }
   \multiput(1,4)(2,0){2}{
   \pscircle[fillstyle=solid,fillcolor=black](0,0){.1}
   }

   \put(2,-2.5){$N$}

   }

   \put(18,0){

   \psline(0,0)(4,4)
   \psline(0,2)(2,4)
   \psline(2,0)(5,3)
   \psline(4,0)(6,2)
   \psline(2,0)(0,2)
   \psline(4,0)(0,4)
   \psline(6,0)(2,4)
   \psline(6,2)(4,4)

   \psline(0,0)(6,0)
   \psline(0,2)(6,2)
   \psline(0,4)(4,4)
   \psline(0,0)(0,4)
   \psline(2,0)(2,4)
   \psline(4,0)(4,4)
   \psline(6,0)(6,2)
   \multiput(0,0)(2,0){4}{
   \pscircle[fillstyle=solid,fillcolor=white](0,0){.1}
   }
   \multiput(0,2)(2,0){4}{
   \pscircle[fillstyle=solid,fillcolor=white](0,0){.1}
   }
   \multiput(0,4)(2,0){3}{
   \pscircle[fillstyle=solid,fillcolor=white](0,0){.1}
   }

   \put(1,1){
   \psline(-1,0)(5,0)
   \psline(-1,2)(4,2)
   \psline(0,-1)(0,3)
   \psline(2,-1)(2,3)
   \psline(4,-1)(4,2)
   \multiput(0,0)(2,0){3}{
   \pscircle[fillstyle=solid,fillcolor=white](0,0){.1}
   }
   \multiput(0,2)(2,0){3}{
   \pscircle[fillstyle=solid,fillcolor=white](0,0){.1}
   }
   }
   \multiput(0,0)(0,2){2}{
   \multiput(1,0)(2,0){3}{
   \pscircle[fillstyle=solid,fillcolor=black](0,0){.1}
   }
   }
   \multiput(0,1)(2,0){4}{
   \pscircle[fillstyle=solid,fillcolor=black](0,0){.1}
   }
   \multiput(0,3)(2,0){3}{
   \pscircle[fillstyle=solid,fillcolor=black](0,0){.1}
   }
   \multiput(1,4)(2,0){2}{
   \pscircle[fillstyle=solid,fillcolor=black](0,0){.1}
   }
   \put(4.4,3.6){$e^*_1$}
   \put(5.4,2.6){$e^*_2$}

   \put(2,-2.5){$G$}

   }

   }
  \end{pspicture}
  \caption{By superimposing $H$ and
  its dual $H^\perp$ and removing 
  $f^*$ and $v^*$, we obtain a balanced bipartite
  graph $N$.
  $G$ is the normal graph induced by
  $V(N)\cup\{f^*,v^*\}$.}
  \label{fig:gmn}
 \end{center}
\end{figure}

Remove from $N'$ the vertex $f^*$ and a vertex $v^*\in V(H)$ 
which is adjacent to $f^*$ in $\Gamma$ and incident to the outer face of $H$.
Then we obtain a {\em balanced bipartite graph} $N$,
 which contains the same number of black and white vertices.
Burton and Pemantle \cite{BurtonPemantle} (see also \cite{KPW}) showed that
there is a bijection
between the set of dimer coverings
of $N$ and the set ${\mathcal T}$ of spanning trees of $H^\perp$.
Here, we review this bijection.
Let $T$ be a spanning tree of $H$.
Then the edges of $H^\perp$ that do not cross the edges of $T$
form a spanning tree of $H^\perp$, called the {\em dual tree}
and denoted by $T^\perp$.
This correspondence makes a bijection between
the set of spanning trees of $H$ and that of $H^\perp$.
We define the root of $T$ (resp. $T^\perp$)
to be $f^*$ (resp. $v^*$), and orient $T$ and $T^\perp$ so that
they point toward the roots.
Then the subset $M=\{\{x,\frac{x+y}{2}\}~|~ (x,y)\in T\mbox{ or }T^\perp \}$
of edges of $N$ is a dimer covering of $N$,
where $(x,y)$ denotes the oriented edge from $x$ to $y$.
This map $T\mapsto M$ is the bijection called
the Temperley bijection \cite{BurtonPemantle,KPW}.
Conversely, let $M$ be a dimer covering of $N$.
Then the map
\[
 \varphi:M\mapsto T=\left\{(x,y)~\bigg|~ \{x,\frac{x+y}{2}\}\in M, x\in V(H)\right\}.
\]
is the inverse of this bijection.

Let $G$ be the normal subgraph
of $\Gamma$ which is induced by the  vertices $V(G)=V(N)\cup \{v^*,f^*\}$.
(see Figure $\ref{fig:gmn}$.)
Then a dimer covering $M$ of $G$ contains exactly one impurity.
By Corollary $\ref{cor:slit-term}$, the slit-curve which intersects with the impurity
terminates at the middle points of the two
diagonal edges $e^*_1$ and $e^*_2$ on the boundary of $G$, 
each of which are adjacent to $f^*$.
Therefore, there exists a unique dimer covering $M'$
such that $M\stackrel{t}{\sim}M'$ and $e^*_1\in M'$.
Since $M'$ can be regarded as
the dimer covering of $N$, we obtain the following surjection,
\[
 \pi: {\mathcal M}(G) \ni  M \mapsto M' \in {\mathcal M}(N).
\]
\begin{lem}
 \label{lem:slit-temperley}
 Let $G$ be the graph described as above and
 let $M$ be a dimer covering of $G$. Then
 \[
  F(M)=\{\varphi\circ\pi(M)\}.
 \]
 Let $T^\perp$ be the dual tree of $\varphi\circ\pi(M)$.
 Then $F^\perp(M)$ can be obtained from $T^\perp$ by
 removing all edges incident to $f^*$ from $T^\perp$
 except for edges in $\{l_1,\ldots,l_{d^*}\}$.
\end{lem}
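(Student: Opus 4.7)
The plan is to exploit the $t$-move invariance of slit-curves (Proposition~\ref{prop:tkeepsslit}) to replace $M$ by its canonical representative $M':=\pi(M)$ without changing either forest: since $M\stackrel{t}{\sim}M'$, we have $F(M)=F(M')$ and $F^\perp(M)=F^\perp(M')$. Because $M'$ is the unique representative of its $t$-class that contains the impurity $e_1^*=\{v^*,f^*\}$, the remaining dimers $\tilde M:=M'\setminus\{e_1^*\}$ form a dimer covering of the balanced bipartite graph $N=G\setminus\{v^*,f^*\}$, and the Temperley bijection applies; set $T:=\varphi(\tilde M)$ and let $T^\perp$ be its dual spanning tree. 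The targets are then $F(M')=\{T\}$ and the claimed description of $F^\perp(M')$.

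The core step is a local matching at every black vertex $b\in V(N)$. Such a $b$ has four white neighbors: two $W_0$-vertices $w_1,w_3$ (the endpoints of the $H$-edge through $b$) and two $W_1$-vertices $w_2,w_4$ (the endpoints of the dual $H^\perp$-edge). A case analysis on which of the four edges $\{b,w_i\}$ is the dimer at $b$, using the arc prescription of Figure~\ref{fig:arc-dimer}, shows that the slit-curves cross exactly the pair $\{b,w_1\},\{b,w_3\}$ when the dimer goes to a $W_1$-neighbor, and exactly $\{b,w_2\},\{b,w_4\}$ when it goes to a $W_0$-neighbor. After deletion of the slit-crossed edges, $b$ lies on a residual two-edge path between $w_1$ and $w_3$ (contributing the $\Lambda$-edge $\{w_1,w_3\}$ to $F(M')$) precisely when the dimer goes to $W_0$; symmetrically $b$ contributes $\{w_2,w_4\}\in E(\Lambda^\perp)$ to $F^\perp(M')$ precisely when the dimer goes to $W_1$. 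Unpacking the definition of $\varphi$, this is exactly the rule by which the Temperley bijection places an edge in $T$ or in $T^\perp$ at $b$. Hence the edges of $F(M')$ from interior black vertices coincide with $T$, and those of $F^\perp(M')$ with $T^\perp$; the identity $F(M')=\{T\}$ then follows because $T$ is already a spanning tree of $H$ and none of its edges is cut by a slit-curve.

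The final and most delicate ingredient is the boundary analysis near $v^*$ and $f^*$. The impurity $e_1^*$ covers both $v^*$ and $f^*$ and is not incident to any black vertex of $N$, so these vertices contribute nothing through the interior step; instead, Corollary~\ref{cor:slit-term} guarantees that the unique slit-curve meeting the impurity terminates at the midpoints of $e_1^*$ and $e_2^*$. A direct geometric inspection of the arcs incident to $f^*$ produced by $M'$ will show that this terminating curve severs every $\Lambda^\perp$-edge between $f^*$ and $V(H^\perp)\setminus\{f^*\}$ \emph{except} those listed as $l_1,\ldots,l_{d^*}$, which sit on the ``inside'' of the two terminating arcs and therefore survive in $F^\perp(M')$. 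Combined with the interior matching this yields the stated description of $F^\perp(M')$. I expect this last step to be the main obstacle: one must enumerate the few possible local configurations of $f^*$ on the outer face of $H^\perp$ (recalling that $d^*\leq 3$) and verify in each case that the terminating slit-curve cuts precisely the $\Lambda^\perp$-edges at $f^*$ that lie outside $\{l_1,\ldots,l_{d^*}\}$, while leaving the edge contributions at every other black vertex undisturbed.
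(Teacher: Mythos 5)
Your proof follows essentially the same route as the paper's own proof, which is only a three-line sketch covering the first claim: reduce to the canonical representative $\pi(M)$ using the $t$-invariance of slit-curves, note that at each black vertex the two uncut edges form exactly the lattice edge recorded by the Temperley map, so the surviving $\Lambda$-edges are precisely $E(T)$, and conclude $F(M)=\{T\}$ from the connectivity of the spanning tree $T$. Your local case analysis at the black vertices is correct and considerably more detailed than what the paper provides. One correction to your boundary step, though: the edges of $T^\perp$ incident to $f^*$ other than $l_1,\ldots,l_{d^*}$ are not ``$\Lambda^\perp$-edges severed by the terminating slit-curve'' --- by the definition of $d^*$, the $l_i$ are the \emph{only} $\Lambda^\perp$-edges joining $f^*$ to $V(H^\perp)\setminus\{f^*\}$, so the set you propose to sever is empty. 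The remaining $f^*$-incident edges of $T^\perp$ are the long boundary edges of $H^\perp$ (the curved edges in Figure \ref{fig:gmn}) that simply do not exist in $\Lambda^\perp$; they drop out of $F^\perp(M)$ because the boundary black vertex such an edge passes through, being matched into $W_1$, has both of its $W_0$-edges cut and becomes a black leaf of the $W_1$-component, which is pruned by the definition of the dual forest. No slit-curve argument is needed for that part; the slit-curve $C^*$ is only relevant to checking that each $l_i\in T^\perp$ genuinely survives.
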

\begin{proof}
 After removing all edges intersecting with slit-curves from $N$,
 the resulting trees whose white vertices are in $V(\Lambda)$ have no
 black  leaves.
 Since $t$-moves keep the slit-curves unchanged,
 the slit-curves can not intersects with $T=\varphi(\pi(M))$ and $T^\perp$, 
 and $T$ is connected, hence $F(M)=\{\varphi\circ\pi(M)\}$.
\end{proof}

\begin{thm}
 Let $G$ and $H^\perp$ be graphs as described above.
 Let ${\mathcal M}$ be the set of the
 dimer coverings of $G$ and
 let ${\mathcal T}$ be the set of the
 spanning trees of $H$.
 Then the map 
 \[
  \bar{\varphi}:({\mathcal M}/\stackrel{t}{\sim})
 \ni [M]\mapsto\varphi\circ\pi(M)
 \in {\mathcal T}
\]
 is a bijection.
\end{thm}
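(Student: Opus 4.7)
The plan is to verify that $\bar\varphi$ is well-defined, surjective, and injective; each step reduces almost immediately to the machinery already developed. First I would check that the auxiliary map $\pi$ is well-defined on the quotient. By \eqref{eq:numhyp} together with the identity $V(G)=V(N)\cup\{v^*,f^*\}$, we have $|W_G|-|B_G|=2$, so every $M\in{\mathcal M}$ contains exactly one impurity $e$. By Corollary~\ref{cor:slit-term}, the unique slit-curve meeting $e$ terminates on the two boundary diagonal edges $e_1^*,e_2^*$ at $f^*$, and Proposition~\ref{prop:tkeepsslit} guarantees that $e$ can be slid along this slit-curve to $e_1^*$ by a sequence of $t$-moves. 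Moreover, each $t$-class contains exactly one representative whose impurity is $e_1^*$, since $t$-moves preserve slit-curves and inside a class the impurity position along the slit-curve determines the rest of the configuration. Hence $\pi$ descends to ${\mathcal M}/\stackrel{t}{\sim}$, and Lemma~\ref{lem:slit-temperley} guarantees $\varphi\circ\pi(M)\in{\mathcal T}$, so $\bar\varphi$ is a well-defined map ${\mathcal M}/\stackrel{t}{\sim}\to{\mathcal T}$.

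For surjectivity, given $T\in{\mathcal T}$ I would set $M':=\varphi^{-1}(T)\in{\mathcal M}(N)$ via the Temperley bijection. The only vertices of $G$ outside $N$ are the two white vertices $v^*$ and $f^*$, joined in $\Gamma$ by the diagonal edge $e_1^*=\{v^*,f^*\}$, so $M:=M'\cup\{e_1^*\}$ is a dimer covering of $G$ whose unique impurity is $e_1^*$. By the uniqueness clause in the construction of $\pi$, we have $\pi(M)=M'$, and therefore $\bar\varphi([M])=\varphi(M')=T$. For injectivity, suppose $\bar\varphi([M_1])=\bar\varphi([M_2])$; since $\varphi$ is a bijection this forces $\pi(M_1)=\pi(M_2)$, and because $M_i\stackrel{t}{\sim}\pi(M_i)$ by construction, transitivity of $\stackrel{t}{\sim}$ gives $M_1\stackrel{t}{\sim}M_2$, that is, $[M_1]=[M_2]$.

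The only step that requires more than definition-chasing is the assertion that inside each $t$-class there is exactly one representative with impurity at $e_1^*$; this is where the real content sits. Existence of such a representative follows from Proposition~\ref{prop:tkeepsslit} and Corollary~\ref{cor:slit-term}, and uniqueness rests on the fact that $t$-moves preserve all slit-curves, so that within a class the impurity traverses a single slit-curve and fixing its position on that curve pins down every other dimer. Once this is granted, the theorem is essentially a repackaging of the Temperley bijection through the map $\varphi$.
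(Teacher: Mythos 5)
Your proof is correct and follows essentially the same route as the paper: the paper's own proof is the same well-definedness/surjectivity/injectivity check, with the key facts about $\pi$ (that each $t$-class contains a unique representative containing $e_1^*$, and that $\pi$ is surjective) already established in the discussion preceding the theorem. You simply re-derive those facts in somewhat more detail (e.g.\ the explicit construction $M=M'\cup\{e_1^*\}$ for surjectivity), which matches or slightly exceeds the paper's level of rigor.
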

\begin{proof}
 If $M_1\stackrel{t}{\sim}M_2$ then
 $\pi(M_1)=\pi(M_2)$.
 Thus $\bar{\varphi}$ is well-defined.
 Since $\pi$ is  surjective and $\varphi$ is a bijection,
 $\bar{\varphi}$ is also surjective.
 If $\bar{\varphi}([M_1])=\bar{\varphi}([M_2])$, then
 $\pi(M_1)=\pi(M_2)$ and hence $M_1\stackrel{t}{\sim}M_2$,
 that is, $\bar{\varphi}$ is injective.
\end{proof}

\subsection{Probability of finding the impurity at a given site}

\begin{figure}[H]
 \begin{center}
  \begin{pspicture}(6,4)

   \psline(0,0)(4,4)
   \psline(0,2)(2,4)
   \psline(2,0)(5,3)
   \psline(4,0)(6,2)
   \psline(2,0)(0,2)
   \psline(4,0)(0,4)
   \psline(6,0)(2,4)
   \psline(6,2)(4,4)

   \psline(0,0)(6,0)
   \psline(0,2)(6,2)
   \psline(0,4)(4,4)
   \psline(0,0)(0,4)
   \psline(2,0)(2,4)
   \psline(4,0)(4,4)
   \psline(6,0)(6,2)
   \multiput(0,0)(2,0){4}{
   \pscircle[fillstyle=solid,fillcolor=white](0,0){.1}
   }
   \multiput(0,2)(2,0){4}{
   \pscircle[fillstyle=solid,fillcolor=white](0,0){.1}
   }
   \multiput(0,4)(2,0){3}{
   \pscircle[fillstyle=solid,fillcolor=white](0,0){.1}
   }

   \psline[linecolor=black,linewidth=0.12,linestyle=solid]{*-*}(5,3)(3,3)(3,1)
   \psline[linecolor=black,linewidth=0.12,linestyle=solid]{*-*}(3,3)(1,3)

   \put(1,1){
   \psline(-1,0)(5,0)
   \psline(-1,2)(4,2)
   \psline(0,-1)(0,3)
   \psline(2,-1)(2,3)
   \psline(4,-1)(4,2)
   \multiput(0,0)(2,0){3}{
   \pscircle[fillstyle=solid,fillcolor=white](0,0){.1}
   }
   \multiput(0,2)(2,0){3}{
   \pscircle[fillstyle=solid,fillcolor=white](0,0){.1}
   }
   }
   \multiput(0,0)(0,2){2}{
   \multiput(1,0)(2,0){3}{
   \pscircle[fillstyle=solid,fillcolor=black](0,0){.1}
   }
   }
   \multiput(0,1)(2,0){4}{
   \pscircle[fillstyle=solid,fillcolor=black](0,0){.1}
   }
   \multiput(0,3)(2,0){3}{
   \pscircle[fillstyle=solid,fillcolor=black](0,0){.1}
   }
   \multiput(1,4)(2,0){2}{
   \pscircle[fillstyle=solid,fillcolor=black](0,0){.1}
   }

   \psarc[linewidth=0.075,linecolor=red](4,4){0.707}{-135}{-45}
   \psarc[linewidth=0.075,linecolor=red](3,3){0.707}{45}{135}
   \psarc[linewidth=0.075,linecolor=red](2,2){0.707}{-45}{45}
   \psarc[linewidth=0.075,linecolor=red](3,1){0.707}{135}{405}
   \psarc[linewidth=0.075,linecolor=red](2,2){0.707}{45}{135}
   \psarc[linewidth=0.075,linecolor=red](1,3){0.707}{45}{315}
   \psarc[linewidth=0.075,linecolor=red](4,2){0.707}{45}{225}
   \psarc[linewidth=0.075,linecolor=red](5,3){0.707}{225}{315}
   \psarc[linewidth=0.075,linecolor=red](2,4){0.707}{225}{315}

   \psline[linecolor=blue,linewidth=0.15]{*-*}(3,1)(2,0)

   \pscircle[fillstyle=solid,fillcolor=white](5,3){.3}
   \put(4.8,2.9){$f^*$}
   \put(4.4,3.6){$e^*_1$}
   \put(5.4,2.6){$e^*_2$}
  \end{pspicture}
  \caption{The impurity intersects with  the slit-curve $C^*$
  which connects the middle points of diagonal edges
  on the boundary.}
  \label{fig:slit-imp}
 \end{center}
\end{figure}
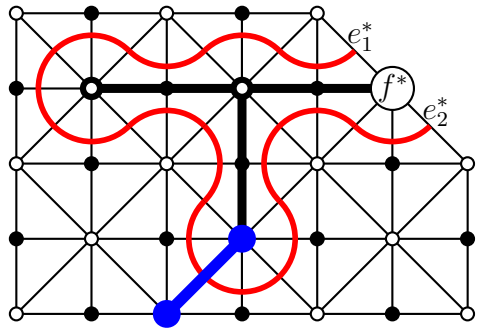

Let $e=\{x,y\}\in E_2$ be a diagonal edge in a normal graph $G$.
Then each class $[M]\in {\mathcal M}/\stackrel{t}{\sim}$ contains
at most one dimer covering with impurity $e$.
Thus, to count the dimer coverings with a fixed
impurity, we can instead count the trees of $H$ 
corresponding to such dimer coverings.
Let $M\in {\mathcal M}$ be a dimer covering such that $T=\bar{\varphi}([M])$.
Then $M$ determines the slit-curves, among which
the one $C^*$ intersecting with the impurity 
terminates at the middle points of $e^*_1=\{f^*, v^*\}$ and $e^*_2$.
Thus the slit-curve $C^*$ surrounds a tree $T^*\in F^\perp(M)$.
(See Figure $\ref{fig:slit-imp}$.)
Therefore, we have,
\begin{lem}
 A spanning tree $T$ of $H$ can be represented as
 $T=\varphi\circ\pi(M)$
 for some dimer covering $M$ containing impurity
 $e=\{x,y\}$ if and only if $x\in T^*$.
\end{lem}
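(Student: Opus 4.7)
The plan is to characterize, for a given spanning tree $T$, the set of diagonal edges that can serve as the impurity in some dimer covering $M$ with $T = \varphi\circ\pi(M)$, and then identify this set with the set of diagonal edges incident to $T^*$ as in the statement. Set $[M_0] := \bar{\varphi}^{-1}(T)$; by Proposition \ref{prop:tkeepsslit}, all elements of $[M_0]$ share the same slit-curves, so the objects $C^*$ and $T^*$ are intrinsic to $T$. Combining Corollary \ref{cor:slit-term} with Proposition \ref{prop:tkeepsslit}, the diagonal edges that appear as the impurity of some $M \in [M_0]$ are exactly the diagonal edges intersected by $C^*$. Hence the lemma reduces to showing that, for a diagonal edge $e = \{x, y\}$ with $x \in W_1$ and $y \in W_0$, the curve $C^*$ crosses $e$ if and only if $x \in T^*$.

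By Proposition \ref{prop:slit-curve}, $C^*$ is a simple arc joining the midpoints of $e_1^*$ and $e_2^*$. Because these two diagonal edges meet at the boundary vertex $f^*$, the arc $C^*$ together with the short boundary path through $f^*$ between its endpoints forms a simple closed curve in the plane, enclosing a bounded region $R$ containing $f^*$.

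The main step is to show that $R$ contains no vertex of $W_0 \cap V(G)$. By Lemma \ref{lem:slit-temperley}, $F(M_0) = \{T\}$ is a \emph{single} tree spanning $W_0 \cap V(G)$. By construction, no edge of $F(M_0)$ crosses a slit-curve, so any $W_0$ vertex inside $R$ would be disconnected, within $T$, from any $W_0$ vertex outside $R$. Since $T$ is connected and $v^*$ lies outside $R$ (it is the endpoint of $e_1^*$ opposite $f^*$, lying across the midpoint at which $C^*$ terminates), $R$ can contain no $W_0$ vertex of $G$ at all. Consequently the $W_1$ vertices inside $R$ form a single connected component of $F^\perp(M_0)$, which is precisely the tree $T^*$ surrounded by $C^*$.

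For any diagonal edge $e = \{x, y\}$ with $x \in W_1$, $y \in W_0$, the endpoint $y$ therefore always lies outside $R$, so $e$ crosses the boundary of $R$ if and only if $x \in R$; and this crossing, since $e$ is distinct from the half-segments of $e_1^*, e_2^*$ that make up the boundary arc of $R$ (or coincides with one of them and meets $C^*$ at its terminal midpoint), happens exactly when $e$ crosses $C^*$ itself. Thus $e$ is crossed by $C^*$ iff $x \in T^*$, which together with the reduction in the first paragraph gives the lemma. The main conceptual point is the emptiness of $R \cap W_0$, which exploits the single-tree structure of $F(M_0)$; the rest is essentially Jordan-curve bookkeeping.
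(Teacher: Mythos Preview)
Your argument is correct and, in fact, supplies considerably more detail than the paper does: the paper states the lemma immediately after the sentence ``Thus the slit-curve $C^*$ surrounds a tree $T^*\in F^\perp(M)$.\ Therefore, we have,'' and gives no further proof. So there is no real ``paper's proof'' to compare against; you have filled in what the authors left implicit.

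Your reduction to ``$e$ is crossed by $C^*$ iff $x\in T^*$'' via Proposition~\ref{prop:tkeepsslit} and Corollary~\ref{cor:slit-term} is exactly the intended mechanism. The Jordan-curve step and the use of the connectedness of $F(M_0)=\{T\}$ (Lemma~\ref{lem:slit-temperley}) to exclude $W_0$-vertices from $R$ are the right ideas. One small point you pass over quickly: from ``no $W_0$-vertices in $R$'' you conclude that the $W_1$-vertices in $R$ form a \emph{single} component of $F^\perp(M_0)$. This deserves one more sentence: two distinct trees of $F^\perp$ inside $R$ would have to be separated by a slit-curve, and any slit-curve contained in $R$ must terminate on the boundary of $G$ inside $\overline{R}$; but the only boundary edges there are $e_1^*$ and $e_2^*$, whose midpoints are already the terminal points of $C^*$, so no other slit-curve fits. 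With that remark added, the argument is complete.
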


 We choose a spanning tree $T$ of $H$ uniformly at random and
 define $p_v$ by 
 \begin{equation}
  \label{eq:pvdef}
  p_v={\rm Pr}(v \in T^*)
 \end{equation}
 for each $v\in V(H).$
 To obtain a uniformly random spanning tree of $H$,
 we can instead choose a uniformly random  spanning tree of $H^\perp$.
 By the last half of Lemma $\ref{lem:slit-temperley}$ and the results
 of Pemantle \cite{Pemantle} on 
 the random spanning trees and the loop erased random walks,
 $p_v$ is the probability
 of a simple random walk on $H^\perp$ starting at $v$
 to arrive at $f^*$ for the first time
 going through the edges in $\{l_1,\ldots,l_{d^*}\}$.
 By the definition $(\ref{eq:pvdef})$, it is clear $p_{f^*}=1$.
 The probabilities $p_v$'s can be computed via the {\em negative
 Laplacian} which is defined as follows.
 The negative Laplacian $A'=(a_{i,j})$
 of $H^\perp$ is the $\#V(H^\perp)$-dimensional square matrix
 defined as follows:
 Each of rows and coluumns of $A'$
 corresponds to a vertex of $H^\perp$ and 
\[
 a_{i,j}=
 \begin{cases}
  -1 & i \mbox{ and } j \mbox{ are adjacent,}\\
  0 &  i \mbox{ and } j \mbox{ are not adjacent,}\\
  4 & i=j, i\neq f^*,\\
  \mbox{the number of edges incident to }f^* & i=j=f^*.
 \end{cases}
\]
Let $A$ be the matrix obtained by
removing the row and column corresponding to $f^*$.
 Then the vector
 ${\mathbf p}=(p_w)_{w\in V\backslash\{f^*\}}$
 satisfies
 \[
  A{\mathbf p}={\mathbf b},
 \]
 where ${\mathbf b}=(b_w)_{w\in V\backslash\{f^*\}}$
 is defined by
 \[
  b_w=
 \begin{cases}
  1 & w \mbox{ is adjacent to }f^* \mbox{ in } \Lambda^\perp ,\\
  0 &  \mbox{ otherwise }.
 \end{cases}
 \]

\begin{thm}
 \label{thm:enum}
 Let $H$, $H^\perp$, $A$ and $p_v$ be as described above.
 Let $e=\{v,w\}$ be a diagonal edge of $G_{m,n}$,
 where $v\in H^\perp_{m,n}$.
 Then the number of dimer coverings
 containing the impurity $e$ is 
 \[
  |\det A|p_v.
 \]
 Therefore the total number of dimer coverings of $G$ is 
\[
 |{\mathcal M}|=|\det A|
 \left( 4\sum_{v\in V(H^\perp)} p_v + d^* - 3 \right)
 =
|\det A| \left(4\langle {\mathbf 1},A^{-1}{\mathbf b}\rangle+d^*+1\right),
\]
 where ${\mathbf 1}= (1,1,\ldots,1)^t$ and $d^*$ is the number of edges
 in $\Lambda^\perp$ connecting $f^*$ and other vertices in $V(H^\perp)$.
\end{thm}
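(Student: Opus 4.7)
The plan is to count $N(e) := |\{M \in \mathcal{M} : e \in M\}|$ for a fixed impurity $e = \{v, w\}$ (with $v \in V(H^\perp)$) and then sum. By Corollary~$\ref{cor:slit-term}$, every class $[M] \in \mathcal{M}/\stackrel{t}{\sim}$ contains at most one covering with impurity $e$, so $N(e)$ is exactly the number of such classes. Through the bijection $\bar{\varphi}$ and the Lemma immediately preceding the theorem, these classes correspond bijectively to spanning trees $T \in \mathcal{T}$ satisfying $v \in T^*$; the definition of $p_v$ then gives $N(e) = p_v\,|\mathcal{T}|$. Next I would identify $A$ as the Laplacian (with $f^*$-row and column removed) of the wired network obtained from $H^\perp$ by adjoining $4 - \deg_{H^\perp}(v)$ auxiliary edges from each non-$f^*$ vertex $v$ to $f^*$, so that this wired graph is precisely the classical planar dual of $H$ with the outer face identified with $f^*$; the matrix-tree theorem combined with the $T \leftrightarrow T^\perp$ duality then yields $|\mathcal{T}| = |\det A|$, hence $N(e) = |\det A|\,p_v$.

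For the total, since every $M \in \mathcal{M}$ contains exactly one impurity by~(\ref{eq:numhyp}),
\[
|\mathcal{M}| \;=\; \sum_{e \in E_2(G)} N(e) \;=\; |\det A| \sum_{v \in V(H^\perp)} n(v)\,p_v,
\]
where $n(v)$ is the number of diagonal edges of $G$ incident to $v$. For $v \in V(H^\perp) \setminus \{f^*\}$, the vertex $v$ sits at the center of a face of $H$ whose four corners all lie in $V(H) \subset V(G)$, giving $n(v) = 4$. For $v = f^*$, the four diagonal neighbors of $f^*$ are the corners of the face of $\Lambda$ at $f^*$; each of the $d^*$ edges $l_i$ of $\Lambda^\perp$ at $f^*$ is dual to an edge of $\Lambda$ shared between $f^*$'s face and a face of $H$, and the simple connectedness of $H$ forces these $d^*$ shared edges to form a consecutive arc along the boundary of $f^*$'s face, contributing exactly $d^* + 1$ distinct corner vertices in $V(H)$; hence $n(f^*) = d^* + 1$.

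Combining these observations with $p_{f^*} = 1$,
\[
\sum_{v \in V(H^\perp)} n(v)\,p_v \;=\; 4 \sum_{v \neq f^*} p_v + (d^* + 1) \;=\; 4 \sum_{v \in V(H^\perp)} p_v + d^* - 3,
\]
which gives the first stated form. For the second form, $\mathbf{p} = A^{-1}\mathbf{b}$ yields $\sum_{v \neq f^*} p_v = \langle \mathbf{1}, A^{-1}\mathbf{b}\rangle$, so $4 \sum_{v} p_v + d^* - 3 = 4 \langle \mathbf{1}, A^{-1}\mathbf{b}\rangle + 4 + d^* - 3 = 4 \langle \mathbf{1}, A^{-1}\mathbf{b}\rangle + d^* + 1$.

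The main delicate point is the boundary count $n(f^*) = d^* + 1$, which rests on the geometric fact that the $d^*$ dual edges at $f^*$ cut out a connected arc on the boundary of $f^*$'s face; once this and the identification $|\mathcal{T}| = |\det A|$ are secured, the rest is straightforward bookkeeping on top of the bijection already established between dimer coverings and spanning trees.
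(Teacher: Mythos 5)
Your proposal is correct and follows essentially the same route as the paper: identify the coverings containing a fixed impurity $e=\{v,w\}$ with the spanning trees $T$ satisfying $v\in T^*$ via the Temperley bijection and the preceding lemma, invoke the matrix-tree theorem for $|\det A|$, and then sum $n(v)p_v$ over $v\in V(H^\perp)$. One remark worth making: the paper's own (two-sentence) proof asserts that $f^*$ is incident to $d^*-1$ diagonal edges of $G$, which is inconsistent with the stated formula; your count $n(f^*)=d^*+1$ (the $d^*$ boundary edges shared with faces of $H$ form a consecutive arc with $d^*+1$ corners, as one checks in the paper's own example where $d^*=2$ and $f^*$ meets three diagonal edges, giving $56\cdot\frac{41}{7}=328$) is the one that actually reproduces $4\sum_v p_v+d^*-3$, so your bookkeeping is the correct reading and fills in a step the paper elides or misstates.
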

\begin{proof}
 By  Kirchhoff's  Matrix-Tree Theorem (see Chapter 6 of \cite{Biggs}),
 the number of spanning trees of $H^\perp$ is $|\det A|$.
 Each $v\neq f^*\in V(H^\perp)$ is incident to
 exactly four diagonal edges of $G$ and
 $f^*$ is incident to $d^*-1$ diagonal edges.
\end{proof}

As an immediate corollary of Theorem $\ref{thm:enum}$ we have
the following:
\begin{thm}
 If we choose a dimer covering of $G$
 uniformly at random,
 the probability of finding a given
 impurity $e=\{v,w\}$ with $v\in V(H^\perp)$ is
 \[
  \frac{p_v}{4\sum_{w\in V(H^\perp)}p_w+d^*-3}.
 \]
\end{thm}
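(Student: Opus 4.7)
The plan is to observe that the statement is a direct ratio of the two enumerations already supplied by Theorem \ref{thm:enum}. Under the uniform distribution on $\mathcal{M}(G)$, the probability of finding the impurity $e = \{v,w\}$ in a random dimer covering is
\[
 \Pr(e \in M) \;=\; \frac{\#\{M \in \mathcal{M}(G) : e \in M\}}{|\mathcal{M}(G)|}.
\]
Theorem \ref{thm:enum} provides both quantities on the right: the numerator equals $|\det A|\, p_v$, and the denominator equals $|\det A|\bigl(4\sum_{w \in V(H^\perp)} p_w + d^* - 3\bigr)$.

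The key step is simply to divide. The common factor $|\det A|$, which counts spanning trees of $H^\perp$ by Kirchhoff's Matrix-Tree theorem, cancels, leaving exactly the claimed expression
\[
 \frac{p_v}{4\sum_{w \in V(H^\perp)} p_w + d^* - 3}.
\]
The only hypothesis that needs to be checked along the way is that $v \in V(H^\perp)$, which ensures that $p_v$ is well-defined by $(\ref{eq:pvdef})$: the quantity $p_v$ was introduced only for vertices of $H^\perp$ since it arises from the random-walk interpretation on that graph via the dual-forest statement in Lemma \ref{lem:slit-temperley}. The other orientation, with $v \in V(H)$, would require invoking the analogous primary-forest side and is not needed for the stated corollary.

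Since Theorem \ref{thm:enum} has already done the substantive counting work (via the Temperley bijection, the $t$-equivalence quotient, and Kirchhoff), there is no real obstacle remaining: the proof is literally a one-line cancellation. If I were writing this out in the paper, I would simply remark that the claim follows immediately by forming the ratio of the two formulas in Theorem \ref{thm:enum} and cancelling $|\det A|$.
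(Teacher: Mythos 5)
Your proposal is correct and matches the paper exactly: the paper states this result as an immediate corollary of Theorem \ref{thm:enum}, obtained by dividing the count $|\det A|\,p_v$ of coverings containing $e$ by the total count $|\det A|\bigl(4\sum_{w}p_w+d^*-3\bigr)$ and cancelling $|\det A|$. Nothing further is needed.
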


\begin{exm}
 Let $H$ be the graph consisting of $n$ squares in $\Lambda$ as shown in the left side of Figure
 $\ref{fig:onedim}$,
 and let $H^\perp$ have the vertex $f^*=(2n+1,1)$ and vertices
 indexed as shown in
 the middle of Figure $\ref{fig:onedim}$.
 Then we obtain the graph $G$ as shown in the right side of
 Figure $\ref{fig:onedim}$.
 Then we have
 \[
  A=
 \left(
 \begin{array}{cccccc}
  4 & -1 & & & & \\
  -1 & 4 & -1  & & & \\
  0 & -1 & 4 & -1  & & \\
   & &  & \ddots   & & \\
   &  &  &  & 4 & -1 \\
   &  &  &  &  & 4 \\
 \end{array}
 \right)
 \]
 The determinant of $A$ and the probability $p_j$ can be evaluated explicitly:
 \[
  \det A = \frac{1}{2\sqrt{3}}\left(\lambda_+^{n+1}-\lambda_-^{n+1}\right),
 \]
 where $\lambda_{\pm}=2\pm\sqrt{3}$.
 \[
  p_j=\frac{p_1}{2\sqrt{3}}\left(1-\left(\frac{\lambda_-}{\lambda_+}\right)^j\right),
 \]
 where
 $p_1=2\sqrt{3}\left(\lambda_+^{n+1}-\lambda_-^{n+1}\right)^{-1}$.
 Thus we have
 \[
  \sum_{j}p_j=\frac{p_1}{2\sqrt{3}}
 \left(
 \frac{\lambda_+^{n+1}-\lambda_+}{\lambda_+-1}
 -
 \frac{\lambda_-^{n+1}-\lambda_-}{\lambda_--1}
 \right)+2
 =\frac{p_1}{2\sqrt{3}}\lambda_+^n(1+o(1)).
 \]
 Thus the probability of finding an impurity $e=\{j,v\}$ in a random
 dimer covering is
 \[
  \frac{1}{4}\lambda_+^{-(n-j)}(1+o(1)).
 \]
\begin{figure}[H]
 \begin{center}
  \begin{pspicture}(16,2)(0,-1)
   \psset{unit=.5}
   \put(0,0){
   \psline(0,0)(4,0)
   \psline(6,0)(8,0)
   \psline[linestyle=dashed](4,0)(6,0)
   \psline(0,2)(4,2)
   \psline(6,2)(8,2)
   \psline[linestyle=dashed](4,2)(6,2)
   \multiput(0,0)(2,0){5}
   {
   \psline(0,0)(0,2)
   \pscircle[fillstyle=solid,fillcolor=white](0,0){.15}
   \pscircle[fillstyle=solid,fillcolor=white](0,2){.15}
   }
   \put(-1,-.8){$(0,0)$}
   \put(7,-.8){$(2n,0)$}
   \put(-1,2.3){$(0,2)$}
   \put(7,2.3){$(2n,2)$}
   \put(4,-2){$H$}
   }

   \put(12,0){
   
   {
   \psset{linecolor=lightgray,linestyle=dashed}
   \psline(0,0)(8,0)
   \psline(0,2)(8,2)
   \multiput(0,0)(2,0){5}
   {
   \psline(0,0)(0,2)
   }
   }
   \psline(6,1)(9,1)
   \psline(-1,1)(4,1)
   \put(0,0)
   {
   \psline(1,-1)(1,3)
   \pscircle[fillstyle=solid,fillcolor=white](1,1){.4}
   }
   \put(2,0)
   {
   \psline(1,-1)(1,3)
   \pscircle[fillstyle=solid,fillcolor=white](1,1){.4}
   }
   \put(6,0)
   {
   \psline(1,-1)(1,3)
   \pscircle[fillstyle=solid,fillcolor=white](1,1){.4}
   }
   \psline[linestyle=dashed](4,1)(6,1)
   \put(6.8,.75){$n$}
   \put(2.8,.75){$2$}
   \put(.8,.75){$1$}
   \pscircle[fillstyle=solid,fillcolor=white](9,1){.45}
   \put(8.6,.8){\small$f^*$}
   \put(7.6,.5){$l_1$}
   \put(4,-2){$H^\perp$}
   }

   \put(24,0){
   {
   \psline(0,0)(8,0)
   \psline(0,2)(8,2)
   \multiput(0,0)(2,0){5}
   {
   \psline(0,0)(0,2)
   }
   }
   \psline(0,1)(9,1)
   \psline(0,0)(1,1)(0,2)
   \multiput(0,0)(2,0){4}
   {
   \psline(1,0)(1,2)
   \psline(1,1)(2,0)(3,1)
   \psline(1,1)(2,2)(3,1)
   \pscircle[fillstyle=solid,fillcolor=white](1,1){.15}
   \pscircle[fillstyle=solid,fillcolor=white](0,0){.15}
   \pscircle[fillstyle=solid,fillcolor=white](2,0){.15}
   \pscircle[fillstyle=solid,fillcolor=white](0,2){.15}
   \pscircle[fillstyle=solid,fillcolor=white](2,2){.15}
   \pscircle[fillstyle=solid,fillcolor=black](0,1){.15}
   \pscircle[fillstyle=solid,fillcolor=black](2,1){.15}
   \pscircle[fillstyle=solid,fillcolor=black](1,0){.15}
   \pscircle[fillstyle=solid,fillcolor=black](1,2){.15}
   }
   \pscircle[fillstyle=solid,fillcolor=white](9,1){.15}
   \put(8.4,1.5){$e^*_1$}
   \put(8.4,0.1){$e^*_2$}
   \put(4,-2){$G$}
   }
  \end{pspicture}
  \caption{$H$ {\rm (left)}, $H^\perp$ \rm{(center)}, and $G$ {\rm (right)}}
  \label{fig:onedim}
 \end{center}
\end{figure}
\end{exm}

\begin{exm}
 Let $H$ be the graph as shown in the left side of Figure
 \ref{fig:m=n=1} and let $H^\perp$ be as shown in the
 middle of Figure \ref{fig:m=n=1}.
 Then we obtain $G$ shown in the right side of 
 Figure $\ref{fig:m=n=1}$ and we have
 \[
  A=\left(
 \begin{array}{ccc}
  4 & -1 & -1\\
  -1 & 4 & 0\\
  -1 & 0 & 4
 \end{array}
 \right),~~
 {\mathbf b}=
 (0,1,1)^t.
 \]
 Thus, $\det A=56$,
 ${\mathbf p}=A^{-1}{\mathbf b}=(\frac{1}{7},\frac{2}{7},\frac{2}{7})^t$ and 
 the number of
 dimer coverings is $328$.
 We have $56\times\frac{2}{7}=16$ dimer coverings with
 a fixed impurity incident to the vertex $3$,
 which are shown in Figure $\ref{fig:fixedimp}$.
\end{exm}

 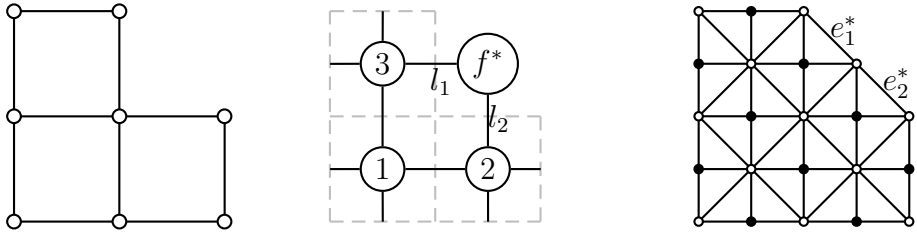
\begin{figure}[H]
  \begin{center}
  \begin{pspicture}(12,3)(0,-1)
   \psset{unit=0.7}
   \put(-1,-1){
   \psline(0,0)(4,0)
   \psline(0,2)(4,2)
   \psline(0,4)(2,4)
   \psline(0,0)(0,4)
   \psline(2,0)(2,4)
   \psline(4,0)(4,2)
   \put(0,0){\pscircle[fillcolor=white,fillstyle=solid](0,0){.15}}
   \put(2,0){\pscircle[fillcolor=white,fillstyle=solid](0,0){.15}}
   \put(4,0){\pscircle[fillcolor=white,fillstyle=solid](0,0){.15}}
   \put(0,2){\pscircle[fillcolor=white,fillstyle=solid](0,0){.15}}
   \put(2,2){\pscircle[fillcolor=white,fillstyle=solid](0,0){.15}}
   \put(4,2){\pscircle[fillcolor=white,fillstyle=solid](0,0){.15}}
   \put(0,4){\pscircle[fillcolor=white,fillstyle=solid](0,0){.15}}
   \put(2,4){\pscircle[fillcolor=white,fillstyle=solid](0,0){.15}}
   }
   \put(6,0){
   \put(-1,-1){
   \psline[linestyle=dashed, linecolor=lightgray](0,0)(4,0)
   \psline[linestyle=dashed, linecolor=lightgray](0,2)(4,2)
   \psline[linestyle=dashed, linecolor=lightgray](0,4)(2,4)
   \psline[linestyle=dashed, linecolor=lightgray](0,0)(0,4)
   \psline[linestyle=dashed, linecolor=lightgray](2,0)(2,4)
   \psline[linestyle=dashed, linecolor=lightgray](4,0)(4,2)
   }
   \cnodeput(0,0){A}{$1$}
   \cnodeput(2,0){B}{$2$}
   \cnodeput(0,2){C}{$3$}
   \cnodeput(2.,2.){D}{$f^*$}


   \pnode(-1.,0.){E}
   \pnode(0,-1.){F}
   \pnode(2.,-1){G}
   \pnode(3.,0.){H}
   \pnode(-1.,2.){I}
   \pnode(0.,3.){J}

   \ncline{A}{B}
   \ncline{A}{C}
   \ncline{B}{D}
   \ncline{C}{D}

   \ncline{A}{E}\ncline{A}{F}
   \ncline{B}{G}\ncline{B}{H}
   \ncline{C}{I}\ncline{C}{J}
   \put(.9,1.5){$l_1$}
   \put(2,.8){$l_2$}
   }

   \put(12,-1){

   \multiput(0,0)(0,1){3}{\psline(0,0)(4,0)}
   \psline(0,3)(3,3)
   \psline(0,4)(2,4)
   \multiput(0,0)(1,0){3}{\psline(0,0)(0,4)}
   \psline(3,0)(3,3)
   \psline(4,0)(4,2)

   \psline(0,0)(3,3)
   \psline(2,0)(4,2)
   \psline(0,2)(2,4)
   \psline(2,0)(0,2)
   \psline(4,0)(0,4)
   \psline(4,2)(2,4)

   \psset{fillstyle=solid,fillcolor=white}
   \multiput(0,0)(0,2){2}{
   \multiput(0,0)(2,0){3}{\pscircle(0,0){.1}}
   \multiput(1,1)(2,0){2}{\pscircle(0,0){.1}}
   }
   \multiput(0,4)(2,0){2}{\pscircle(0,0){.1}}

   \multiput(0,0)(0,2){1}{\psset{fillstyle=solid,fillcolor=black}
   \multiput(1,0)(2,0){2}{\pscircle(0,0){.1}}
   \multiput(0,1)(2,0){3}{\pscircle(0,0){.1}}
   }
   \multiput(1,2)(2,0){2}{\pscircle[fillstyle=solid,fillcolor=black](0,0){.1}}
   \multiput(0,3)(2,0){2}{\pscircle[fillstyle=solid,fillcolor=black](0,0){.1}}
   \multiput(1,4)(2,0){1}{\pscircle[fillstyle=solid,fillcolor=black](0,0){.1}}

   \put(2.5,3.5){$e^*_1$}
   \put(3.5,2.5){$e^*_2$}

   }
  \end{pspicture}
   \caption{$H$, $H^\perp$ and $G$}
   \label{fig:m=n=1}
  \end{center}
 \end{figure}

\begin{figure}[H]
 \begin{center}
  \includegraphics[width=15cm]{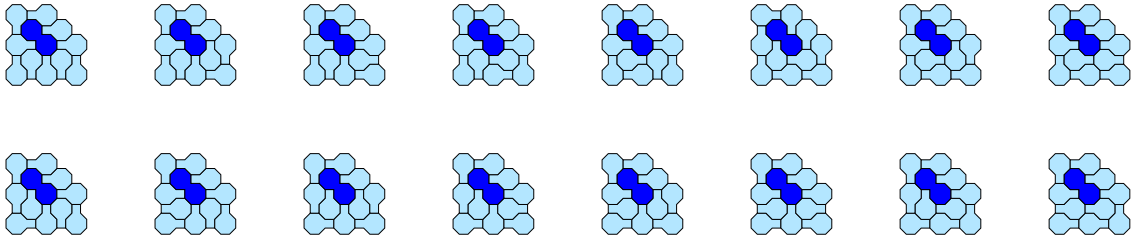}
 \end{center}
 \caption{Dimer coverings with a fixed impurity}
 \label{fig:fixedimp}
\end{figure}

\section*{Acknowledgment}
The authors thank Nicolas Destainville for helpful suggestions.


\begin{thebibliography}{1}

\bibitem{Biggs}
N.~Biggs.
\newblock {\em Algebraic Graph Theory}.
\newblock Cambridge University Press, 1994.

\bibitem{BurtonPemantle}
R.~Burton and R.~Pemantle.
\newblock Local characteristics, entropy and limit theorems for spanning trees
  and domino tilings via transfer-impedances.
\newblock {\em Annals of Probability}, 21:1329–--1371, 1993.

\bibitem{CKP}
R.~Kenyon H.~Cohn and J.~Propp.
\newblock A variational principle for domino tilings.
\newblock {\em J. Amer. Math. Soc.}, 14:297--3462, 2001.

\bibitem{Pemantle}
R.~Pemantle.
\newblock Choosing a spanning tree for the integer lattice uniformly.
\newblock {\em Annals of Probability}, 19:1559 -- 1574, 1991.

\bibitem{KOS}
A.~Okounkov R.~Kenyon and S.~Sheffield.
\newblock Dimers and amoebae.
\newblock {\em Ann. Math.}, 163 no. 3:1019--1056, 2006.

\bibitem{KPW}
J.~Propp R.~Kenyon and D.~Wilson.
\newblock Trees and matchings.
\newblock {\em Electronic Journal of Combinatorics}, 7:\#R25, 2000.

\end{thebibliography}

\end{document}